\newtheorem{theorem}{Theorem}[section]
\newtheorem{prop}[theorem]{Proposition}
\newtheorem{proposition}[theorem]{Proposition}
\newtheorem{corollary}[theorem]{Corollary}
\theoremstyle{definition}
\newtheorem{remark}[theorem]{Remark}
\newtheorem{example}[theorem]{Example}
\newtheorem{question}[theorem]{Question}
\theoremstyle{definition}
\newtheorem{definition}[theorem]{Definition}
\DeclareMathOperator{\Cay}{Cay}
\DeclareMathOperator{\dist}{\mathsf{dist}}
\DeclareMathOperator{\wCop}{wCop}
\newcommand*{\bigcdot}{}
\DeclareRobustCommand*{\bigcdot}{%
  \mathbin{\mathpalette\bigcdot@{}}%
}
\newcommand*{\bigcdot@scalefactor}{.5}
\newcommand*{\bigcdot@widthfactor}{1.15}
\newcommand*{\bigcdot@}[2]{%
  \sbox0{$#1\vcenter{}$}
  \sbox2{$#1\cdot\m@th$}%
  \hbox to \bigcdot@widthfactor\wd2{%
    \hfil
    \raise\ht0\hbox{%
      \scalebox{\bigcdot@scalefactor}{%
        \lower\ht0\hbox{$#1\bullet\m@th$}%
      }%
    }%
    \hfil
  }%
}
\newcounter{ccomments}
\newcounter{ecomments}
\begin{document}

\title[Weak cops in Wreath Products]{The Lamplighter groups have infinite weak cop number}

\author[ ]{Anders Cornect} 
\address{Waterloo, ON, Canada N2L 3G1}
\email{acornect@uwaterloo.ca}

\author[]{Eduardo Mart\'inez-Pedroza }  
\address{Memorial University of Newfoundland, St. John's, NL, Canada}
\email{emartinezped@mun.ca}

\date{\today}

\begin{abstract}
The weak cop number of a graph,  a variation of the cop number,  is an invariant suitable for infinite graphs and is a quasi-isometric invariant. While for any $m\in\mathbb{Z}_+\cup\{\infty\}$ there exist locally finite infinite graphs with weak cop number $m$, it is an open question whether there exists locally finite vertex transitive graphs whose weak cop number is different than $1$ and $\infty$. We test this question on Cayley graphs of wreath products; these are objects known for their exotic geometries. We prove that Cayley graphs of wreath products of nontrivial groups by infinite groups have infinite weak cop number.  The result is proved by defining a new pursuit and evasion game  and proving the existence of strategies for the evader. We also include a short argument that Cayley graphs of Thompson's group $F$ have infinite weak cop number.      
\end{abstract}

\maketitle


 \section{Introduction}

There has been recent interest on quasi-isometric invariants of graphs defined via combinatorial games, and their connections to geometric group theory, see for example~\cite{ABK20, ABGK23, Le19, Lee2023, MP23}.

The Cops and Robber game was introduced independently in the late 1970's and early 1980's by different researchers; among these were the works of  Quilliot~\cite{Quilliot1978} and Nowakowski and Winkler~\cite{Nowakowski1983}. This is a perfect information two player game on an undirected graph, where one player controls a set of cops and the other one controls a single robber. On the graph each cop and the robber choose a vertex to occupy, with the cops choosing first. The game then alternates between cops and the robber moving along adjacent vertices, with the cops moving first. The cops win if, after a finite number of rounds, a cop occupies the same vertex as the robber;  the robber wins if he can avoid capture indefinitely. The cop number of a graph is the minimum number of cops necessary to always capture a robber.  

Lee et al.~\cite{Lee2023} introduced a variation of the cops and robber game called  \emph{Weak cops and Robbers}. This is a two-player game where one of the players controls a finite set of cops, while the other controls a single robber. The objective of the cops is to protect arbitrarily large finite subgraphs of the underlying graph, subject to some parameters. These parameters are chosen by the players. First, the cops player chooses the number of cops, as well as the cops' speed and reach. The robber player, knowing this information, chooses his speed and challenges the cops to protect a large ball in the graph; the cops choose their initial positions, and then the robber choose his initial position. After these choices have been made, the game starts and the cops and robber move in alternating turns, up to a distance determined by their respective speeds. The cops win the game if at some stage the robber is captured (the robber is within reach of a cop) or from some turn on, they keep the robber outside the ball. 
See Section~ \ref{sec:Background} for a precise definition. 

The \emph{weak cop number} of a connected graph $\Gamma$, denoted $\wCop(\Gamma)$, is the minimum number of cops required for the cop player to always have a winning strategy in the Weak cops and Robbers game on $\Gamma$. If no such finite number exists, then we say that $\wCop(\Gamma) = \infty$. This is an interesting numerical  invariant of infinite graphs, it is preserved by quasi-isometries, for every $m \in \mathbb{N} \cup \{\infty\}$ there exists a graph with weak cop number $m$, and the invariant behaves well with respect to different operations including  products, see the work of  
Lee et al~\cite{Lee2023}. 
They raised the following question given that their examples of graphs with arbitrary weak cop numbers are locally finite but lack symmetry.

 \begin{question}[{\cite[Question L]{Lee2023}}]\label{question:main0}
Is there a connected vertex transitive locally finite graph $\Gamma$ such that $1<\wCop(\Gamma)<\infty$?
\end{question}

A natural source to explore this question is on Cayley graphs of finitely generated groups. In fact, the notion of weak cop number yields an invariant of finitely generated groups, via playing the game on Cayley Graphs with respect to finite generating sets, since any two Cayley graphs with respect to finite generating sets of a common group have the same weak cop number~\cite[Corollary G]{Lee2023}.

\begin{definition}[Weak cop number of a group]
    Let $G$ be a finitely generated group. The weak cop number $\wCop(G)$ is defined as the weak cop number of any Cayley graph of $G$ with respect to a finite generating set. 
\end{definition}

In~\cite{Lee2023}, Lee et al showed that for finitely generated groups,  free groups have weak cop number 1, and non-cyclic free abelian groups have infinite weak cop number. A non-trivial result in their article   is  that one-ended non-amenable groups have infinite weak cop number~\cite[Theorem H]{Lee2023}.  They also stated they following restricted version of Question~\ref{question:main0}. 

  \begin{question}[{\cite[Question K]{Lee2023}}]\label{question:main}
        Does there exist a finitely generated group $G$ with $1 < \wCop(G) < \infty$?
    \end{question}

Question~\ref{question:main} is intriguing. A positive answer would place the weak cop number as a rich  integral invariant of finitely generated groups worth of a deeper study. Even in the negative case, a description of the class of finitely generated groups with weak cop number one does not seem trivial, but it is plausible to expect a positive answer to the following question.

\begin{question}
For finitely generated groups, does having weak cop number one implies being virtually free?
\end{question}

By the results in~\cite{Lee2023}, efforts to understand Question~\ref{question:main} should focus on one-ended amenable groups. In this article, we test this question on some finitely generated groups known for their exotic geometries. Our main result is the following.

\begin{theorem}\label{thm:main}
  The restricted wreath product $G \wr H$ of finitely generated groups  has infinite weak cop number if $G$ is non-trivial and $H$ is infinite.  
\end{theorem}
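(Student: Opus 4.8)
The plan is to fix, once and for all, arbitrary cop parameters --- a number $k$ of cops together with a bounded speed and a bounded reach $\rho$ --- and then to produce, for a suitably large radius $R$ chosen by the robber, an evasion strategy that simultaneously avoids capture forever and returns to the ball $B(e,R)$ infinitely often. Since $k$ is arbitrary, this gives $\wCop(G\wr H)=\infty$. By~\cite[Corollary G]{Lee2023} I may work with any fixed finite generating set, so I take $S=S_G\sqcup S_H$, where $S_H$ generates $H$ and the elements of $S_G$ toggle the lamp at the current lamplighter position; elements are pairs $(f,h)$ with $f\colon H\to G$ finitely supported. I would first record the standard word-metric description: $\dist(e,(f,h))$ is the minimal cost of a walk in $H$ starting at $e_H$, visiting $\operatorname{supp}(f)$, and ending at $h$, where each $H$-edge costs $1$ and setting the lamp at a visited vertex $x$ costs $|f(x)|_G$. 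The two hypotheses enter here in a transparent way: $H$ infinite makes the underlying space unbounded, and $G$ nontrivial guarantees at least two lamp states at every site, hence genuine branching.

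The engine of the argument is a quantitative ``expensive to correct a distant lamp'' estimate. I would prove that if $f(x)\neq f'(x)$ for some $x\in H$, then $\dist\bigl((f,h),(f',h')\bigr)\ge \dist_H(h',x)+\dist_H(x,h)$, since any walk realising the distance must bring the lamplighter to $x$ to change that lamp. The consequence I will exploit is a safety certificate: a cop at lamplighter-position $h'$ can only come within reach of the robber if the two lamp configurations agree on the $H$-ball of radius $\rho$ about $h'$. Thus any vertex $x$ at which the robber's lamp differs from a cop's, with $x$ at $H$-distance more than $\rho$ from that cop, witnesses that the cop cannot capture the robber at that moment.

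Next I would isolate the combinatorial core as a new pursuit--evasion game, which I think of as a \emph{crossing game} on a layered structure indexed by $H$-position: each layer carries a ``width'' equal to the number of lamp configurations the robber can realise there, an evader and $k$ cop tokens move with bounded steps, and each reach-$\rho$ cop occupies only a bounded number of the lanes in any layer. The evader wins if it crosses back and forth between the deep interior and the exterior infinitely often. Using the distance lemma I would show that a winning evader strategy lifts to a winning robber strategy on $G\wr H$: the robber simulates the evader's motion with the lamplighter, realises distinct lanes using the $\ge 2$ lamp states supplied by $G\neq 1$, and the far-disagreement certificate prevents capture while the lamplighter is routed back into $B(e,R)$. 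The essential point the game is designed to package is that a barrier of $k$ bounded-reach cops can block only boundedly many lanes, whereas the number of lanes available along an unbounded geodesic ray of $H$ (which exists since $H$ is infinite, locally finite and connected) grows like $2^n$.

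Finally I would construct the evader's strategy and verify it wins for every $k$. Following a geodesic ray escaping to infinity in $H$, the evader repeatedly selects a fresh crossing location beyond the reach-neighbourhoods of all $k$ cops --- always possible because those neighbourhoods cover only a bounded portion of an unbounded graph --- traverses an unblocked lane into $B(e,R)$, and then exits again, cycling forever. I expect the \emph{main obstacle} to be exactly this routing step made dynamic: against fast, long-reach, adaptively moving cops I must guarantee that at each cycle there is an unblocked route into the deep ball that the robber can actually follow without crossing any reach-$\rho$ ball, which amounts to a coarse inseparability statement --- that removing the bounded danger zone $\bigcup_i B(\text{cop}_i,\rho)$ never disconnects $B(e,R/2)$ from the complement of $B(e,2R)$ once $R$ is large. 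This is where the exponential lane-count from $G\neq 1$ and the unbounded extent from $H$ infinite are both indispensable, and it is precisely the obstruction that cannot be resolved through the non-amenable route of~\cite[Theorem H]{Lee2023}, since wreath products of the relevant type are amenable; hence the need for the bespoke crossing game rather than an appeal to non-amenability.
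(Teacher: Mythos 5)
Your setup is sound as far as it goes: the reduction to a fixed generating set, the word-metric description of $G\wr H$, and the key distance estimate $\dist\bigl((f,h),(f',h')\bigr)\ge \dist_H(h',x)+\dist_H(x,h)$ whenever $f(x)\neq f'(x)$ are all correct and are (implicitly or explicitly) the same ingredients the paper uses. But the argument has a genuine gap exactly where you flag it: the ``dynamic routing'' step. Your strategy requires the robber to travel from outside $B(e,2R)$ back into $B(e,R/2)$ infinitely often while avoiding the union of the reach-$\rho$ balls of $k$ adaptively moving cops, and you reduce this to a coarse inseparability statement that you do not prove. That statement is precisely the one that fails to be automatic for amenable groups (as you yourself note, this is why \cite[Theorem H]{Lee2023} does not apply), and the ``exponential lane-count'' heuristic does not obviously rescue it: the lanes (lamp configurations over a given $H$-position) are not freely interchangeable, since switching between two configurations that differ at a site $x$ costs a round trip to $x$ in $H$, so a cop barrier in the $H$-direction is not trivially circumvented by changing lanes. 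As written, the central claim of your proof is an acknowledged conjecture rather than a lemma.

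The paper avoids this difficulty entirely by never asking the robber to leave a bounded region. Its Lamplighter game confines the robber to a single geodesic segment $P$ of $H$ with $2n+\sigma+\rho$ vertices, labelled $\ell_1,\dots,\ell_n,m_1,\dots,m_{\sigma+\rho},r_1,\dots,r_n$; on each turn the robber shuttles from one end of $P$ to the other and toggles the lamps at $\ell_i$ and $r_i$ so that, at the start of every cops' turn, cop $c_i$ disagrees with the robber's configuration at \emph{both} $\ell_i$ and $r_i$. By your own distance lemma, correcting both discrepancies forces $c_i$ to traverse the middle block of $\sigma+\rho$ lamps, which costs at least $\sigma+\rho+2$ moves against a speed budget of $\sigma$, so $c_i$ always ends its turn at distance greater than $\rho$. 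Since every board the robber occupies lies within a fixed radius of a fixed center board, the robber is trivially never excluded from the relevant ball, and no separation or routing statement is needed. If you want to salvage your approach, the fix is to replace the escape-and-return scheme with a bounded-region strategy of this kind; otherwise you must actually prove the inseparability claim, which is the hard (and, for these amenable groups, delicate) part.
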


Some well-known groups covered by Theorem~\ref{thm:main} are the \emph{Lamplighter groups} $\mathbb{Z}_m\wr\mathbb{Z}$ which are known to be amenable and of exponential growth. Let us remark that a result of Baumslag~\cite{Ba60} implies that the  wreath products $G\wr H$ with $G$ non-trivial and $H$ infinite are not finitely presented.

Theorem~\ref{thm:main} is proved by defining a new game called \emph{Lamplighter}. Briefly, this is a perfect information game with at least $P\geq2$ players on a single underlying object called a \emph{streetmap}, essentially a Cayley graph $\Cay(H)$ where the vertices represent lamps that can take on a variety of states given by $\Cay(G)$. One player, the \emph{lamplighter}, moves along a copy of the graph $\Cay(H)$ restricted to a chosen finite subgraph called the \emph{area of play}, changing the states of lamps. The other $P-1$ players, the \emph{copiers}, move along their own copies of $\Cay(H)$, working together in an attempt for one of them to approximate the lamplighter's pattern of lamps. The lamplighter and the group of copiers move alternately, changing the state of some of the lamps close to their current position. The copiers win the game if eventually one of them can approximate the lamplighter's pattern of lamps. Under the assumptions, we show that the lamplighter has a wining strategy for any number of copiers. We then prove that this implies that $\Cay(G\wr H)$ has infinite weak cop number. This new game can be regarded as a variation of the weak cops and robbers where the robber player imposes on himself a finite area of play, instead of being able to freely move on the underlying infinite graph.
A precise definition of the Lamplighter game and the proof of Theorem~\ref{thm:main} are the contents of Section~\ref{sec:Lamp}.

Let us state a question related to Theorem~\ref{thm:main}. The Diestel-Leader graphs $DL(m,n)$ for $m,n\in\mathbb{Z}_+$ are connected, locally finite, vertex-transitive graphs which are regarded as generalizations of Cayley graphs of Lamplighter groups $\mathbb{Z}_n\wr \mathbb{Z}$. It is a remarkable result of Eskin, Fisher and Whyte that $D(m,n)$ is quasi-isometric to a Cayley graph of a finitely generated group if and only if $m=n$; see~\cite{EFW12}. The graph $D(m,m)$ is quasi-isometric to a Cayley graph of the Lamplighter group $\mathbb{Z}_m\wr \mathbb{Z}$ which has infinite weak cop number by Theorem~\ref{thm:main}. 

\begin{question}
 Does $D(m,n)$ have infinite weak cop number for any $m,n\in \mathbb{Z}_+$ ?   
\end{question}

Another finitely generated group known for its exotic geometry is Thompson's group $F$, for background see~\cite{BG84,CFP96}. It  is an outstanding question in group theory whether $F$ is amenable. In this note, we also include short computation of the weak cop number of Thompson's group $F$ communicated to the authors by Francesco Fournier-Facio. 

\begin{theorem}\label{thm:main2}
  Thompson's group $F$ has infinite weak cop number.
\end{theorem}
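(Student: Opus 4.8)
The plan is to exhibit a non-cyclic free abelian subgroup of $F$ that is a \emph{retract}, and to prove that the weak cop number cannot drop when one passes to a retract. Since $\ZZ^2$ has infinite weak cop number (being non-cyclic free abelian), a retraction $F\to\ZZ^2$ will immediately force $\wCop(F)=\infty$. So the proof splits into a structural input about $F$ and a game-theoretic monotonicity lemma.

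First I would produce the retract. View $F$ as the group of orientation-preserving piecewise-linear homeomorphisms of $[0,1]$ with dyadic breakpoints and slopes that are powers of $2$, and recall that its abelianization is the homomorphism $\phi\colon F\to\ZZ^2$, $\phi(f)=\big(\log_2 f'(0^+),\,\log_2 f'(1^-)\big)$. Choose $a\in F$ supported in $[0,\tfrac12]$ with $a'(0^+)=2$, and $b\in F$ supported in $[\tfrac12,1]$ with $b'(1^-)=2$; such elements exist by the self-similarity of $F$. They commute because their supports are disjoint, and $\phi(a)=(1,0)$, $\phi(b)=(0,1)$. Hence $A:=\langle a,b\rangle$ is a $2$-generated abelian group surjecting onto $\ZZ^2$, so $A\cong\ZZ^2$ and $\phi|_A$ is an isomorphism. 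Composing $\phi$ with $(\phi|_A)^{-1}$ yields a retraction $r\colon F\to A$, exhibiting $A\cong\ZZ^2$ as a retract of $F$.

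The main content is the monotonicity lemma: if $H$ is a retract of a finitely generated group $G$ and $\wCop(H)=\infty$, then $\wCop(G)=\infty$. I would prove this by a \emph{shadowing} argument. Fix the retraction $r\colon G\to H$; as a homomorphism it is $L$-Lipschitz for the word metrics, with $L=\max_{s}|r(s)|_H$ over a finite generating set of $G$, and $r|_H=\id_H$. Facing $k$ cops on $\Cay(G)$ of speed $\sigma$ and reach $\rho$, the robber runs a virtual game on $\Cay(H)$ whose $k$ cops sit at the images $r(g_1),\dots,r(g_k)$ of the real cops; these virtual cops have speed at most $L\sigma$ and reach $L\rho$, and the robber plays the winning $\Cay(H)$-strategy for those parameters (available because $\wCop(H)=\infty$), executing the prescribed $H$-moves inside $\Cay(G)$ and setting his own speed to match. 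The key estimate is that if a real cop ever comes within $G$-distance $\rho$ of the robber's position $h\in H$, then $d_H(r(g_i),h)=d_H(r(g_i),r(h))\le L\,d_G(g_i,h)\le L\rho$, so the corresponding virtual cop would already be within reach in the $H$-game, which a winning robber strategy never permits. Thus the robber is never captured in $\Cay(G)$, and since his $H$-strategy keeps him re-entering a fixed ball of $H$, contained in a fixed ball of $G$, he invades that ball forever; as $k$ was arbitrary, $\wCop(G)=\infty$.

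The one delicate point I anticipate is the Lipschitz control of the shadow cops, and this is precisely where working with a retract, rather than a merely undistorted subgroup, is essential: nearest-point projection onto a subgroup need not be coarsely Lipschitz in non-hyperbolic groups such as $\ZZ^2$ or $F$, whereas a genuine retraction homomorphism is Lipschitz by construction and makes the estimate immediate. Combining the retraction $F\to\ZZ^2$ with the monotonicity lemma and the infinitude of $\wCop(\ZZ^2)$ then completes the proof.
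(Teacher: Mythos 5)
Your proposal is correct and follows essentially the same route as the paper: both exhibit the retraction $F\to\ZZ^2$ via the slope homomorphisms at $0$ and $1$ (using two commuting elements supported on $[0,\tfrac12]$ and $[\tfrac12,1]$) and then invoke monotonicity of $\wCop$ under retracts together with $\wCop(\ZZ^2)=\infty$. The only difference is that the paper simply cites the monotonicity statement and $\wCop(\ZZ^2)=\infty$ from Lee et al.\ (Theorems E and C), whereas you sketch a proof of the monotonicity lemma via a shadowing argument; your sketch is sound in outline, modulo the minor adjustment that the virtual cops' reach should absorb the constant comparing the word metrics of $H$ and $G$ so that the robber's connecting \emph{paths}, not just his positions, stay clear of the real cops.
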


The proof that $F$ has infinite weak cop number relies on the existence of a group  retraction  $F\to \mathbb{Z}^2$. Then, results in~\cite{Lee2023}, imply that $\wCop(F)\geq\wCop(\mathbb{Z}^2)=\infty$. 

There are other groups with exotic geometries where one can test Question~\ref{question:main}. The authors are not ready to conjecture an answer for this question.

\subsection*{Organization} The rest of the article is organized into three sections. Section~\ref{sec:Background} contains some preliminaries and the precise definition of the weak cop number for graphs. Then  Section~\ref{sec:Lamp} contains the proof of Theorem~\ref{thm:main}.
The last section contains the proof  that Thompson's group $F$ has infinite weak cop number.

\subsection*{Acknowledgements} 
The authors thank Francesco Fournier-Facio,  Florian Lehner and Danny Dyer for comments on preliminary versions of this work. Special thanks to the anonymous referee for suggestions and corrections.  
The first author acknowledges funding by the Natural Sciences and Engineering Research Council of Canada NSERC, via the Undergraduate Student Research Award (USRA). The second author acknowledges funding by the Natural Sciences and Engineering Research Council of Canada NSERC.

\section{Preliminaries} \label{sec:Background}

\subsection{Graph theory language}
   A \emph{graph} $\Gamma$ is a pair $(V,E)$, where $V$ is called the set of \emph{vertices} of $\Gamma$, and $ E\subset \binom{V}{2} $  contains subsets of cardinality 2 in $V$, called the \emph{edges} of $\Gamma$. Let   $V(\Gamma)$ and $E(\Gamma)$ denote the vertex set and the edge set of $\Gamma$  respectively. Two vertices $u,v \in V$ are said to be \emph{adjacent} in $\Gamma$ if $\{u,v\} \in E$. Observe that this definition encompasses \emph{simple} graphs, i.e. graphs that have no edges from a vertex to itself (no loops), and each edge appears at most once in $E$ (no multiple edges). A graph is \emph{trivial} if it has only one vertex, and is \emph{infinite} if it has infinite vertex set. By an \emph{isomorphism} from a graph $G$ to a graph $H$, we mean a bijection $\Phi\colon V(G) \rightarrow V(H)$ such that $\{u,v\} \in E(G)$ if and only if $\left\{\Phi(u),\Phi(v)\right\} \in E(H)$.

By a \emph{path} in a graph $\Gamma$ we mean a sequence of vertices $v_0$, $v_1$, $v_2$, \ldots, $v_k$  such that, for each $i = 1,2,\ldots,k$, $\{v_i, v_{i-1}\}$ is an edge in $\Gamma$. The \emph{length} of such path is defined as $k$. 
A graph is \emph{connected} if there if there is a path between any two vertices. In a connected graph, the length of the shortest path between two vertices $u,v$ is  called the \emph{distance} between them and is denoted by  $\dist_\Gamma(u,v)$. A path $v_0,v_1,\ldots ,v_k$ is a  \emph{geodesic} if $\dist_\Gamma(v_0,v_k)=k$. Note that in a connected graph, there is a geodesic between any pair of points. The \emph{diameter} of $\Gamma$ is the supremum of the distances between any pair of vertices; in particular, a graph can have infinite diameter. Note that if the graph has infinite diameter, then there are geodesics of arbitrarily large length. A graph $\Gamma$ is \emph{locally finite} if for any vertex $v$, the set of vertices at distance one from $v$ is finite. 

By the \emph{infinite path $P_\infty$} we mean the graph with vertex $\mathbb{Z}$ and edge set $\{ (n,n+1)\mid\; n\in \mathbb{Z}\}$. As usual, the \emph{$n$-path $P_n$} is the graph with vertex set $n=\{0,1,\ldots , n-1\}$ and edge set $\{(k,k+1)\mid\; 0\leq k<n-1 \}$; the \emph{$n$-cycle $C_n$} is the graph with vertex set $n$ and edge set $E(P_n)\cup\{ \{0,n-1\}\}$.    

Let $G$ be a group with a generating set $S \subset G$ that does not contain the identity.  The \emph{Cayley graph} of $G$ with respect to $S$, denoted $\Cay(G,S)$, is the graph with $V(\Cay(G,S)) = G$, and $E(\Cay(G,S)) = \left\{\{g, g  s\} \mid \; g \in G, s \in S \right\}$. It is a simple exercise to show that $\Cay(G,S)$ is a connected graph. Observe that if $S$ is finite then $\Cay(G,S)$ is a locally finite graph. In particular, if $G$ is an infinite group and  $S$ is a finite generating set, then $\Cay(G,S)$ is a locally finite, infinite, connected graph, and therefore by K\"onig's lemma, it contains geodesics of arbitrarily large length. 
    
\subsection{Definition of the weak cop number}

    Given a connected graph $\Gamma$, Weak Cops and Robbers is played on $\Gamma$ as follows. There are two players, with one playing the robber, and one playing a set of $n$ cops for some $n \in \mathbb{N}$. Before the game begins, the cops choose two positive integers $\sigma$ and $\rho$, called the cops' \emph{speed} and \emph{reach}, respectively. Knowing these values, the robber then chooses positive integers $\psi$ and $R$, along with a vertex $v$ of $\Gamma$; these parameters are called the robber's \emph{speed}, the \emph{radius of the area of play} and the \emph{center of the area of play} respectively.

    Once the parameters have been chosen, each of the cops choose a vertex for their initial positions. Knowing the choices made by each of the cops, the robber chooses a vertex for their own initial position. The cops and robber move in alternating turns, starting with the cops. On the cops' turn, each cop can move to a vertex at distance at most $\sigma$ from its current position. The robber is \emph{captured} during this turn if any of the cops move to within distance $\rho$ of him. On the robber's turn, he can move to a vertex at distance at most $\psi$ from his current position, provided he has a path to that vertex that contains no vertex within distance $\rho$ from any of the cops. The cops win if they can eventually protect the ball of radius $R$ centered at $v$. This means that either the robber is captured or, beginning on some turn, they can permanently prevent the robber from moving within distance $R$ of the vertex $v$.

    We say that the graph $\Gamma$ is CopWin$(n,\sigma,\rho,\psi,R)$ if for any $v \in V(\Gamma)$, $n$ cops with speed $\sigma$ and reach $\rho$ can eventually protect the ball of radius $R$ centered at $v$. $\Gamma$ is \emph{$n$-weak cop win} if they can choose $\sigma$ and $\rho$ such that $\Gamma$ is always CopWin$(n,\sigma,\rho,\psi,R)$ for any $\psi$ and $R$ chosen by the robber. Symbolically,
    \[ \Gamma \text{ is } n\text{-weak cop win} \iff \exists\; \sigma, \rho\; \forall\; \psi, R: \Gamma \text{ is CopWin}(n,\sigma,\rho,\psi,R). \]
    The \emph{weak cop number} of $\Gamma$, denoted $\wCop(\Gamma)$, is the smallest $n$ such that $\Gamma$ is $n$-weak cop win. If no such integer exists, we say $\wCop(\Gamma) = \infty$.

\section{The weak cop number of wreath products}
\label{sec:Lamp}

\subsection{The Lamplighter game}  

Lamplighter is a perfect information game played with $P \ge 2$ players on a single underlying object called a \emph{streetmap}, essentially a graph where the vertices represent lamps that can take on a variety of states. One player, the \emph{lamplighter}, moves along the graph, changing the states of lamps. The other $P-1$ players, the \emph{copiers}, move along their own graphs, working together in an attempt for one of them to approximate the lamplighter's pattern of lamps. Before describing the game, we introduce some terminology.

\begin{definition}[Streetmap]
    \label{def:omega-streetmap}
    A \emph{streetmap} is a triple $M = (\Omega, \omega, \Lambda)$, where $\Omega$ and $\Lambda$ are simple, connected graphs, and $\omega$ is some distinguished point of $\Omega$. The vertices of $\Lambda$ are called \emph{lamps}, and the vertices of $\Omega$ are called \emph{states}.
\end{definition}

    The reader is encouraged to think about the lamp states as different ways the lamps can be ``lit". For example, when $\Omega$ is the $2$-path with vertices labelled 0 and 1, one can imagine that a lamp being in state 0 means it is unlit, and that a lamp in state 1 is lit. When $\Omega$ has more than two vertices, one can consider the nonzero states as being different \emph{colours} that the lamp can take, and the adjacency relation on $\Omega$ describes how the colors of the lamp can be changed. The vertex $\omega$ of $\Omega$ can be thought of as the default state of a lamp. 
    
\begin{definition}[Board]    
    Let $M$ be a streetmap. An \emph{M-Board} is a triple $B = (M, p, \phi)$, where $p \in V(\Lambda)$ and $\phi:V(\Lambda) \rightarrow V(\Omega)$ is a mapping with $\phi(v) = \omega$ for all but finitely many $v \in V(\Lambda)$.    The lamp $p$ is called the player's \emph{position} (in the street map), and for each $v \in V(\Lambda)$, $\phi(v)$ is called the \emph{state} of the lamp $v$.
\end{definition}

At any stage of the game, each player is assigned a board. The board of each player changes with the moves that the player does. 

\begin{figure}[t] 
    \centering    \includegraphics[width=\textwidth]{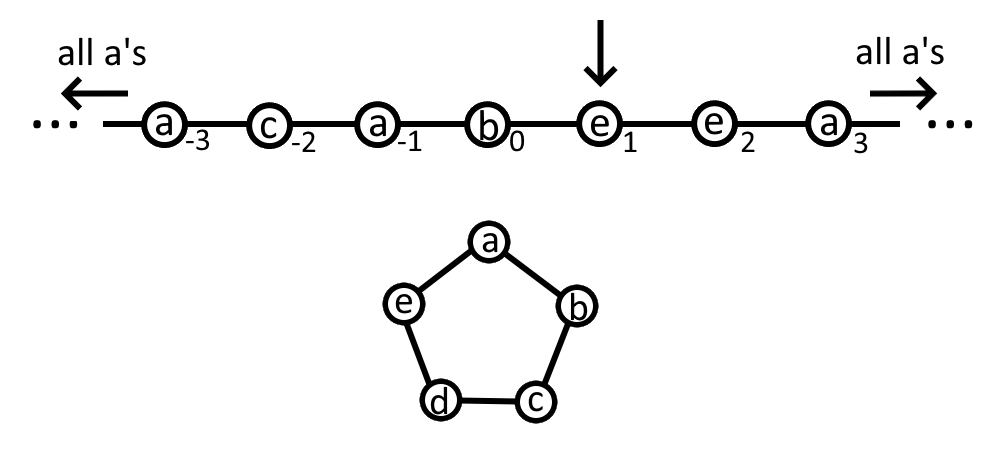}
    \caption{An illustration of an $M$-board over the streetmap $M=(C_5,a, P_\infty)$.}   \label{fig:lamplighter-as-omega}
\end{figure}

\begin{example}
Consider the $5$-cycle $C_5$ and label its  vertices by $a,b,c,d,e$,  the infinite path $P_\infty$ with vertices labelled by $\mathbb{Z}$ in the natural way, and let
\[\phi\colon \mathbb{Z} \to \{a,b,c,d,e\}, \qquad \phi(n)= \left\{
  \begin{array}{ll} 
      a & \text{if }|n|\geq 3 \text{ or }   n=-1,\\
      b & \text{if } n=0,\\
      c & \text{if } n=-2,\\
      e & \text{if } n=1,2.
      \end{array}
\right.\]
Figure~\ref{fig:lamplighter-as-omega} illustrates the street map $M=(C_5,a, P_\infty)$ and the $M$-Board $(M,1,\phi)$. In the illustration, each lamp is represented by a circle; 
the vertex labels of the lamps are below and to the right of the corresponding circle; the state of the lamp $\phi(v)$ is inside the corresponding circle. The downwards arrow represents the player's position.
\end{example}

Now we are ready to describe the game. 

\subsubsection{Initial setup of the Lamplighter game.}

Lamplighter is a perfect information game played with $P \ge 2$ players on   a \emph{streetmap} $M$. There is a distinguished player called the \emph{Lamplighter} and the other $P-1$ players are called \emph{copiers}.

    \begin{itemize}
        \item The copiers collectively choose two positive integers $\rho$ and $\sigma$, called the \emph{copier reach} and \emph{copier speed}, respectively. The speed is a measure of how many moves each copier can take in a single \emph{turn}.
        \item Knowing the values of $\rho$ and $\sigma$, the lamplighter chooses positive integers $\psi$ and $r$ called the \emph{lamplighter speed} and \emph{radius of play}, respectively, as well as a vertex $v$ of $\Lambda$ called the  \emph{center of the area of play}. The \emph{area of play} is defined as the ball of radius $r$ centered at $v$ in $\Lambda$ which we denote by $\Lambda_r(v)$.

        \item The copiers each choose a starting board for themselves, i.e., they select their initial position $p$ and the initial states of their lamps $\phi$.
        \item The lamplighter, knowing the starting board for each of the copiers, chooses his starting board. All of the lamps that are not in state $\omega$ on this board, as well as the lamplighter's position, must lie within the area of play.
    \end{itemize}

  \begin{figure}[t] \includegraphics[width=0.5\textwidth]{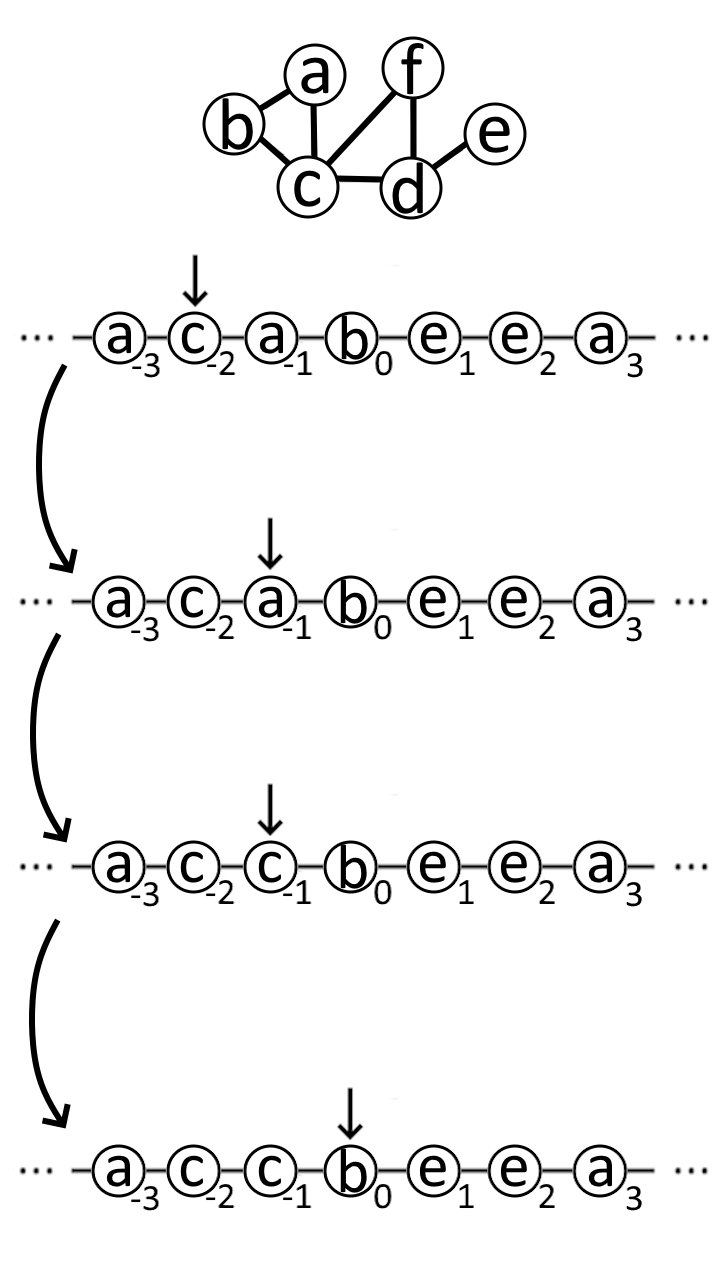}
        \caption{An example of a turn in Lamplighter. In this case there is a move of type 1, follow by a move of type 2, and then a move of type 1 again. }
        \label{fig:lamplighter-turn}
    \end{figure}

\subsubsection{Turns and player movement}
    After all of the parameters are selected, play happens in turns, starting with the copiers. On the copiers' turn, they all play in unison. The copiers are able to communicate with one another and coordinate their moves.
    The copiers, on each of their turns, move up to a total of $\sigma$ times (being able to select any of their available moves each time), followed by the lamplighter similarly moving up to $\psi$ times.  
      
  
    Let $B = (M,p,\phi)$ be the player's board on a given turn. There are two types of moves, each one changing the player's board:
   
    \begin{itemize}
        \item     The first type of move is to change positions to a new lamp $p'$ that is adjacent to $p$, in other words, the new board becomes 
    \[ B' = (M,p',\phi), \text{ where } p' \in \Lambda_r(v) \text{ such that } \{p,p'\} \in E(\Lambda). \]
        If the lamplighter chooses this option, the lamp $p'$ must be in the area of play.

        \item The second possible type of move is to change  the state of the lamp at their current position in accordance to the relation given by the edges of $\Omega$. That is, if the state of the lamp at the player's current position is $s=\phi(p)$, he can choose to change the lamp to a new state $s'$, where
        $s$ and $s'$ are adjacent vertices in $\Omega$. In this case the new board becomes       
    \[ B' = (M,p,\phi'), \text{ where } \phi'(v) = \begin{cases}
        s' &\text{if } v = p, \\
        \phi(v) &\text{otherwise.}
    \end{cases}\]
    \end{itemize}

    \begin{example}\label{ex:Second}
    Let $\Omega$ be the graph with six vertices labelled by $a,b,c,d,e,f$ illustrated in Figure~\ref{fig:lamplighter-turn}. Consider the streetmap $M=(\Omega, a, P_\infty)$. The figure illustrates the turn of a player in the Lamplighter game, where the given player has speed $\ge 3$ by showing  player's board at the beginning of the turn plus the boards after three moves.  
    \end{example}

\subsubsection{Lamplighter move restriction}

The lamplighter  plays the entire game within the area of play. That means at each stage of the game, if $(M,p,\phi)$ is a board representing the lamplighter then $p$ is a vertex in the area of play.  

\subsubsection{Win conditions}

    Given a streetmap $M$, we denote by $\mathcal{B}(M)$ the set of all possible boards on $M$. Define the \emph{distance} between two boards in $\mathcal{B}(M)$ to be the minimum number of moves required to change one board into the other. Note that this defines a metric on $\mathcal{B}(M)$.

    The goal of the copiers is to come ``close enough" to copying the lamplighter's board. Specifically, the \emph{copiers win} if at any point during either player's turn, the lamplighter's board is at distance $\rho$ or less from any of the copiers' boards. Conversely, the lamplighter wins if he can devise a strategy that will let him avoid coming within the copiers' reach indefinitely.

    \begin{example} Consider the Lamplighter game on the street map $M$ defined in Example~\ref{ex:Second} with three copiers.  Figure~\ref{fig:copier-win-example} shows the boards of all the players at some stage of the game. If $\rho \ge 3$ in this game, then the copiers have won here, since the third copier $c_3$ is three moves away from copying the lamplighter's board (change lamp $-1$ to state $a$, move to lamp $-2$, change lamp $-2$ to state $c$).
    \end{example}

    \begin{figure}
        \centering
        \includegraphics[width=\textwidth]{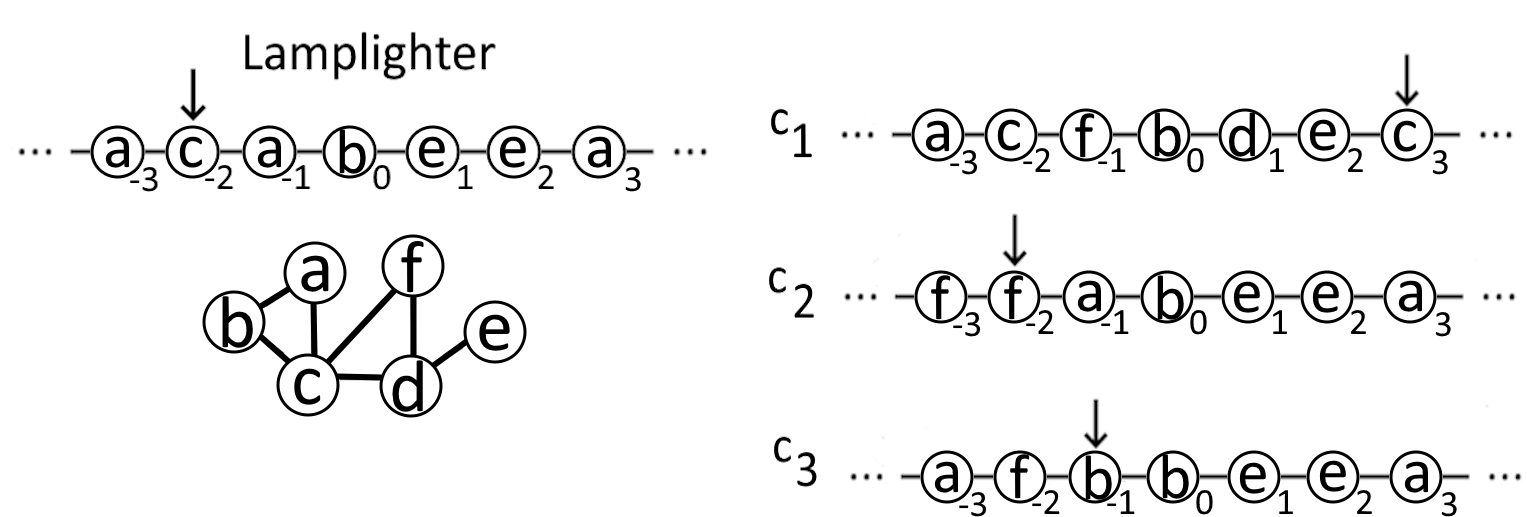}
        \caption{An example of a game state in Lamplighter. If in this game $\rho \ge 3$, then the copiers have won.}
        \label{fig:copier-win-example}
    \end{figure}

\subsubsection{The copier-win number $\wCop^*(M)$}
 If a streetmap $M$ admits a strategy that allows $n$ copiers to choose values of $\rho$ and $\sigma$ that will always allow them to win, no matter what values of $\psi$, $r$, and $v$ the lamplighter chooses, then the streetmap $M$ is called \emph{$n$-copier win}. The \emph{copier-win number} of $M$, which we will denote $\wCop^*(M)$, is the smallest positive integer $n$ for which $M$ is $n$-copier win. If $M$ is not $n$-copier win for any $n \ge 1$, we say $\wCop^*(M) = \infty$.

\subsection{General winning strategy for the Lamplighter}

    \begin{theorem} \label{thm:omega-lamplighter-strategy}
        Let $M = (\Omega, \omega, \Lambda)$ be a streetmap. If $\Lambda$ is a connected graph with infinite diameter and $\Omega$ is  nontrivial connected graph, then for any $n \ge 1$, the lamplighter has a winning strategy in a game of Lamplighter on $M$ against $n$ cops, i.e. $\wCop^*(M) = \infty$.
    \end{theorem}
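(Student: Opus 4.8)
The plan is to exhibit, for each $n$, an explicit evasion strategy for the lamplighter and to certify its success through a family of \emph{discrepancy potentials}. For a lamplighter board $B_L=(M,p_L,\phi_L)$ and the board $B_C^i=(M,p_i,\phi_i)$ of copier $i$, I set
\[ \Phi_i \;=\; \bigl|\{\,v\in V(\Lambda)\ :\ \phi_L(v)\neq\phi_i(v)\,\}\bigr|, \]
the number of lamps on which the two boards disagree. Two elementary facts drive everything. First, $\dist(B_L,B_C^i)\ge \Phi_i$, since transforming one board into the other requires at least one state-change move at each disagreeing lamp; hence keeping every $\Phi_i>\rho$ at all times guarantees the copiers never come within reach. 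Second, $\Phi_i$ is $1$-Lipschitz in the moves: a position move leaves every $\Phi_i$ unchanged, a state change by the lamplighter alters each $\Phi_i$ by at most $1$, and a state change by copier $i$ alters only $\Phi_i$, by at most $1$. In particular, in one turn copier $i$ can decrease $\Phi_i$ by at most $\sigma$ and cannot affect $\Phi_j$ for $j\neq i$.

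To set up the arena, given $\rho,\sigma$ I would put $m=n(\rho+2\sigma)+\rho+\sigma+2$ and, using that $\Lambda$ is connected of infinite diameter (hence has infinitely many vertices), choose a radius $R_0$ with $|\Lambda_{R_0}(v)|\ge m$ and fix $m$ distinct lamps, the \emph{slots}, inside $\Lambda_{R_0}(v)$. The lamplighter then declares center $v$, a radius of play $r$ large enough that all slots and all geodesics between them lie in $\Lambda_r(v)$, and a speed $\psi$ large enough to visit any $\sigma$ of the slots and change their states in a single turn (e.g.\ $\psi\ge (2R_0+1)\sigma$). Throughout the game the lamplighter changes states only at slots, each such change moving the state to an adjacent vertex of $\Omega$, which exists since $\Omega$ is nontrivial and connected, and keeps his position among the slots; this respects the lamplighter move restriction.

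The heart of the argument is the invariant that \emph{at the start of every copiers' turn one has $\Phi_i\ge \rho+\sigma+1$ for all $i$}. Since copier $i$ lowers $\Phi_i$ by at most $\sigma$ and leaves $\Phi_j$ ($j\ne i$) untouched, after the copiers move we still have $\Phi_i\ge\rho+1>\rho$ for all $i$, so no capture occurs on their turn and the lamplighter must only restore the invariant on his turn while never letting any $\Phi_i$ reach $\rho$. Call copier $i$ \emph{tight} if $\Phi_i\le \rho+2\sigma$ at this moment and \emph{slack} otherwise, and call a slot \emph{free} if the lamplighter's state there agrees with every tight copier. The number of non-free slots is at most $\sum_{\text{tight }i}\Phi_i\le n(\rho+2\sigma)$, so at least $m-n(\rho+2\sigma)=\rho+\sigma+2\ge\sigma$ slots are free. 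The lamplighter tours $\sigma$ free slots and changes the state at each to an adjacent one. Every such change raises $\Phi_i$ by exactly $1$ for each tight copier and changes each slack $\Phi_i$ by at most $1$; hence every tight $\Phi_i$ rises monotonically from $\ge\rho+1$ to $\ge\rho+\sigma+1$, while every slack $\Phi_i$ stays within $\sigma$ of its value $\ge\rho+2\sigma+1$ and so remains $\ge\rho+\sigma+1$, and at no intermediate step does any $\Phi_i$ fall below $\rho+1$. This restores the invariant and keeps all boards at distance $>\rho$ throughout the turn. A symmetric counting argument lets the lamplighter pick an initial board, lighting $\rho+\sigma+1$ slots on which he disagrees with every copier, that realizes the invariant at the start; the induction then yields $\wCop^*(M)=\infty$.

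The step I expect to be the main obstacle, and the reason for working with $\Phi_i$ rather than with the board metric $\dist(\,\cdot\,,B_C^i)$ directly, is guaranteeing safety at the \emph{intermediate} boards produced during the lamplighter's long turn. Board distance is only $1$-Lipschitz and can dip unpredictably along a path, so a naive ``run to a far board'' strategy may pass within reach of a waiting copier. The discrepancy potential sidesteps this entirely: because the boosting move touches only free slots, it is monotone nondecreasing in every tight $\Phi_i$, so a mid-turn dip is impossible by construction, while slack copiers are shielded by their $2\sigma$ margin. The remaining difficulty is purely quantitative, namely that toggling distant slots costs travel; confining the slots to a single ball makes the touring cost linear in $\sigma$ and $R_0$, so one choice of $\psi$ works on every turn no matter how the configuration has evolved.
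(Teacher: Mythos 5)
Your argument is essentially correct and takes a genuinely different route from the paper's. The paper's lamplighter maintains only \emph{two} disagreement lamps per copier but places them at opposite ends of a long geodesic path (with $\sigma+\rho$ lamps between them), so the lower bound on board distance comes from the copier's \emph{travel} cost between the two discrepancies; the safety of the lamplighter's own long traversal then has to be argued separately by observing that position moves and state changes at $\ell_i,r_i$ never decrease the relevant distances. Your scheme instead maintains \emph{many} disagreements per copier, clustered in a single ball, and bounds the board distance from below purely by the discrepancy count $\Phi_i$; the tight/slack dichotomy plus the free-slot count makes the potential monotone for the dangerous copiers during the lamplighter's turn, which handles the intermediate-board issue more transparently than the paper does. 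Your approach also only uses that $\Lambda$ is infinite and connected (some ball contains $m$ vertices), not that it has infinite diameter, so it is slightly more general; the paper genuinely needs a long geodesic. The quantitative bookkeeping (the choice of $m$, the $\pm 1$ Lipschitz bounds, the restoration of the invariant from $\ge\rho+1$ back to $\ge\rho+\sigma+1$) all checks out.

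One step is stated incorrectly, though it is repairable within your own framework: the initial board. You ask for $\rho+\sigma+1$ slots ``on which he disagrees with \emph{every} copier,'' but such slots need not exist --- if $\Omega$ has only two states and two copiers occupy different states at a slot, the lamplighter must agree with one of them there, and the copiers (who move first in the setup) can arrange this at every slot. The fix is to drop simultaneity: dedicate $\rho+\sigma+1$ slots to each copier $i$ separately and set the lamplighter's state at each dedicated slot to any state different from $\phi_i$ there (possible since $|V(\Omega)|\ge 2$). This needs $n(\rho+\sigma+1)$ slots, which your $m=n(\rho+2\sigma)+\rho+\sigma+2$ already accommodates, and it yields $\Phi_i\ge\rho+\sigma+1$ for every $i$ as required. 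With that correction the induction closes and the proof is complete.
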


    \begin{proof}
\textbf{Choosing the lamplighter's parameters:} Suppose, at the beginning of the game, the copiers choose their reach and speed to be $\rho$ and $\sigma$, respectively. Then the lamplighter chooses speed $\psi = 3n + \sigma + \rho + 1$ and radius of play $r = \left\lceil\frac{\sigma+\rho}{2}\right\rceil + n$. 

Since $\Lambda$ has infinite diameter, it contains a geodesic path $P$ consisting of $\sigma + \rho + 2n$ lamps.
In particular, for any pair of vertices in $P$, the distance in $\Lambda$ equals their distance in $P$; so there are no ``shortcuts" in $\Lambda$ between them. Define the center of the area of play $v$ the central vertex  of the path $P$, and observe that the vertices of $P$ are contained in the area of play. 
From one endpoint of $P$ to the other, in order, label these lamps
        \[ \ell_1, \ell_2, \ldots, \ell_n, m_1, m_2, \ldots, m_{\sigma+\rho}, r_1, r_2, \ldots, r_n. \]

Since $\Omega$ is nontrivial and connected, we can pick a state $ \omega_1 \in V(\Omega)$ such that $\omega$ and $\omega_1$ are distinct adjacent  vertices in $\Omega$.
From here on, we denote the states $\omega$ and $\omega_1$ as $0$ and $1$ respectively.

After the copiers choose their initial boards, the lamplighter chooses his initial board as follows. Denote the copiers by $c_1, c_2, \ldots, c_n$ in some arbitrary order, and let $B_i = (M,p_i,\phi_i)$ be the initial board of copier $c_i$ for $i = 1, \ldots, n$. Then the lamplighter chooses his initial board $B_0$ to be
        \[ B_0 = (M,\ell_1,\phi_0), \text{ with } \phi_0(v) = \begin{cases}
            1 &\text{if } v \in \{\ell_i, r_i\} \text{ for some } i \le n \text{ and } \phi_i(v) = 0, \\
            0 &\text{otherwise.}
        \end{cases} \]
        Note that for each $j = 1, \ldots, n$, we have that $\phi_0(\ell_j) \neq \phi_j(\ell_j)$ and 
        $\phi_0(r_j) \neq \phi_j(r_j)$. Moreover, the lamplighter starts the game at the end of the path $P$ labelled by $\ell_1$.
        
        \textbf{The lamplighter's strategy:} On each of his turns, he walks  to the opposite end of the path (i.e., from $\ell_1$ to $r_n$, or from $r_n$ to $\ell_1$). As he travels, if he encounters a vertex $\ell_i$ or $r_i$ in $\{\ell_1, \ell_2, \ldots, \ell_n, r_1, r_2, \ldots, r_n\}$ that is the same state as the corresponding vertex in $B_i$, switch it to the opposite state (0 or 1). This guarantees that, at the beginning of the copiers' turn, for each copier $c_i$ there are at least two lamps ($\ell_i$ and $r_i$) whose states are different from the corresponding lamp on the lamplighter's board. Observe that the lamplighter will be playing the entire game within the area of play. 

        \textbf{Proof that the lamplighter's speed $\psi$ is sufficient:} In moving from one end of the path of lamps to the other, he must move between $2r+1$ lamps, and therefore requires $2r = 2\left(\left\lceil\frac{\sigma+\rho}{2}\right\rceil + n\right) \le \sigma + \rho + 1 + 2n$ changes in position. Additionally, he has to change the states of at most $n$ lamps, since at most one lamp per copier will differ in state from that copier's board. Therefore, we can complete the given strategy in at most $3n + \sigma + \rho + 1$ moves per turn, and our chosen speed is sufficient.
        
        \textbf{Proof that the lamplighter wins with the above strategy:} We want to show that if, on the copiers' turn, no copier $c_i$ can change their board to come within distance $\rho$ of the lamplighter's board. Namely, if at the beginning of the copiers' turn, $\phi_i(\ell_i) \neq \phi_0(\ell_i)$ and $\phi_i(r_i) \neq \phi_0(r_i)$, then during the entirety of the copiers' turn, the distance between the lamplighter's board and any of the copier's boards is strictly greater than $\rho$.

        As the copiers begin their turn, choose an arbitrary copier $c_i$. In order to match the lamplighter's board, $c_i$ must switch both the lights $\ell_i$ and $r_i$. However, no matter what position $c_i$ is in at the beginning of the turn, they need to use at least 1 move to change the state of the lamp, and then traverse through the middle sequence of $\rho + \sigma$ lamps, which requires at least $\rho + \sigma + 1$ moves. However, they only \emph{have} at most $\sigma$ moves available to them, which means at the end of their turn they still require at least $(\rho+\sigma+2)-\sigma = \rho + 2 > \rho$ moves to match the lamplighter's board. Therefore every copier ends their turn at distance strictly greater than $\rho$ from the lamplighter, and thus the copiers do not win on their turn.

        We must additionally show that the copiers do not win during the \emph{lamplighter's} turn. In other words, we must show that the lamplighter can execute the above strategy without coming within distance $\rho$ of a copier.

        Note that, for each copier $c_i$, the argument above only relies on the position of $c_i$ and the states of lights $\ell_i$ and $r_i$. In particular, the position of the lamplighter does not affect this fact, nor do the states of the other lamps. Therefore, changing the lamplighter's position does not cause the copiers to win. Additionally, this means that changing the state of a lamp $\ell_i$ or $r_i$ does not bring any copier $c_j$ with $j \neq i$ within distance $\rho$ of the lamplighter; furthermore, it strictly \emph{increases} the distance from the lamplighter to copier $c_i$. Thus, the copiers do not win during the lamplighter's turn.
        
        Therefore, the lamplighter can avoid the copiers indefinitely for any number $n$ of copiers, and so $\wCop^*(M) = \infty$.
    \end{proof}

 Let us record a few properties of the strategy that we described in the proof above, since they are used in the proof of the main result of the section. 
 \begin{remark}\label{rem:Proof}
In the proof of Theorem~\ref{thm:omega-lamplighter-strategy}, the strategy for the lamplighter 
playing on $M$ against $n$ copiers with speed $\sigma$ and reach $\rho$ has the following properties: 
\begin{itemize}           \item The lamplighter chooses his speed to be $\psi = 3n + \sigma + \rho + 1$,  and his radius of play to be $r = \left\lceil\frac{\sigma+\rho}{2}\right\rceil + n$. 
\item The lamplighter fixes a geodesic path $P$ in $\Lambda$ with $2n+\sigma+\rho$ vertices, and lets the center of the area of play be the central vertex of $P$.  

\item The lamplighter fixes a state $\omega_1$ such that $\omega$ and $\omega_1$ are distinct, adjacent vertices in $\Omega$.

\item Any board $(M,p,\phi)$ representing the lamplighter during any stage of the game satisfies that: $p$ is a vertex of $P$, $\phi$ only takes values in $\{\omega,\omega_1\}$, and $\phi^{-1}(\omega_1)$ is a finite subset of vertices of $P$. 

\item If $(M,v,0)$ is a board where $v$ is the central  vertex of $P$ and $0$ represents the constant function $V(\Lambda)\to \{\omega\}$, then distance between  $(M,v,0)$ and any board $(M,p,\phi)$ representing the lamplighter at some stage of the game is at most $6r+1$.

Indeed to move from $(M,v,0)$ to $(M,p,\phi)$ one needs to first use $r$ moves to move to an endpoint of $P$ at maximal distance from $p$, then transverse the path $P$ to the other end while correcting the discrepancies between $0$ and $\phi$ which require at most $4r+1$ moves, and finally move back to $p$ which needs at most $r$ moves.

\item Analogously, if $(M,p_1,\phi_1)$ and $(M,p_2,\phi_2)$ are boards representing  the lamplighter during distinct stages in the game then 
$(M,p_1,\phi_1)$ and $(M,p_2,\phi_2)$ are at distance at most $R=2(6r+1)$. 
\end{itemize}        
 \end{remark}




\subsection{The wreath product of graphs}

In this section, we defined the \emph{(restricted) wreath product} of graphs, a notion that  is heavily related to the Lamplighter game.   We first introduce some notation.

\begin{definition}\label{def:NewNotation}
    Let $X$ and $Y$ be sets, where $X$ has some distinguished base element $a$. We define
    \[ X_{a}^{(Y)} = \{ f\colon Y \rightarrow X \mid f(y) = a \text{ for all but finitely many values of } y \}. \]
    In other words, we can view $X_{a}^{(Y)}$ as either the set of finitely-supported \emph{functions} from $Y$ to $X$ based at $a$, or as the set of finitely-supported \emph{$X$-sequences} indexed by $Y$, based at $a$.
\end{definition}

Note that we often use two interchangeably notations for  a function $f\colon Y \rightarrow X$, namely, as the sequence $(f(y))_{y \in Y}$ or as a sequence $(x_y)_{y \in Y}$ with $f(y) = x_y$. The following definition is a version of the one given by Donno \cite{Donno2013}, expanded to include infinite graphs.

\begin{definition}[Restricted Wreath Product of Graphs]
    \label{def:graph-wreath-product}
    Let $\Lambda = (V,E)$ and $\Omega = (W,F)$ be graphs, where $\Omega$ has base point $\omega$. The \emph{(restricted) wreath product} is the graph $\Omega \wr \Lambda$ whose vertex set is the Cartesian product $W_{\omega}^{(V)} \times V$, where two vertices $(f,v)$ and $(f',v') \in V(\Omega \wr \Lambda)$ are adjacent if:
    \begin{enumerate}
        \item $v = v' =: \overline{v}$, $f(w) = f'(w)$ for every $w \neq \overline{v}$, and $\{f(\overline{v}),f'(\overline{v})\} \in F$. An edge of this type is called an edge of type 1.
        \item $f = f'$ and $\{v,v'\} \in E$ . An edge of this type is called an edge of type 2.
    \end{enumerate}
\end{definition}

\begin{figure}[!t]
\centering
\begin{subfigure}[b]{\textwidth}
    \centering
    \raisebox{0cm}{\includegraphics[width=\textwidth]{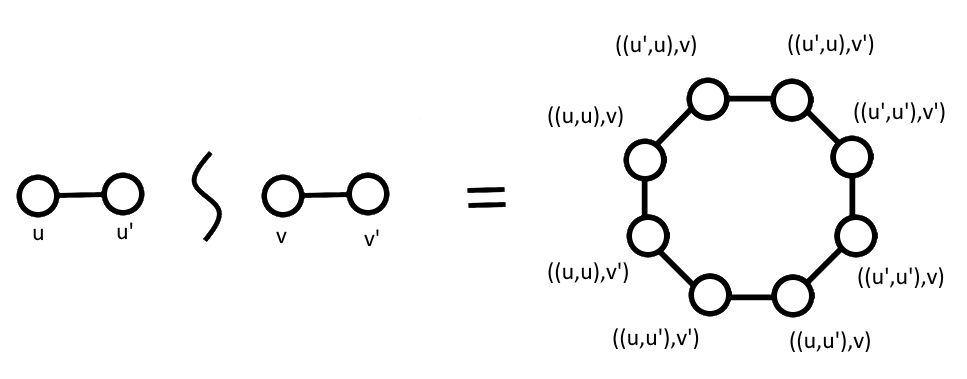}}
    \caption{The graph wreath product $P_2 \wr P_2$ is isomorphic to $C_8$.}
    \label{fig:wreath-product-ex-1}
\end{subfigure}
\hspace{1cm}
\begin{subfigure}[b]{0.9\textwidth}
    \centering
    \includegraphics[width=\textwidth]{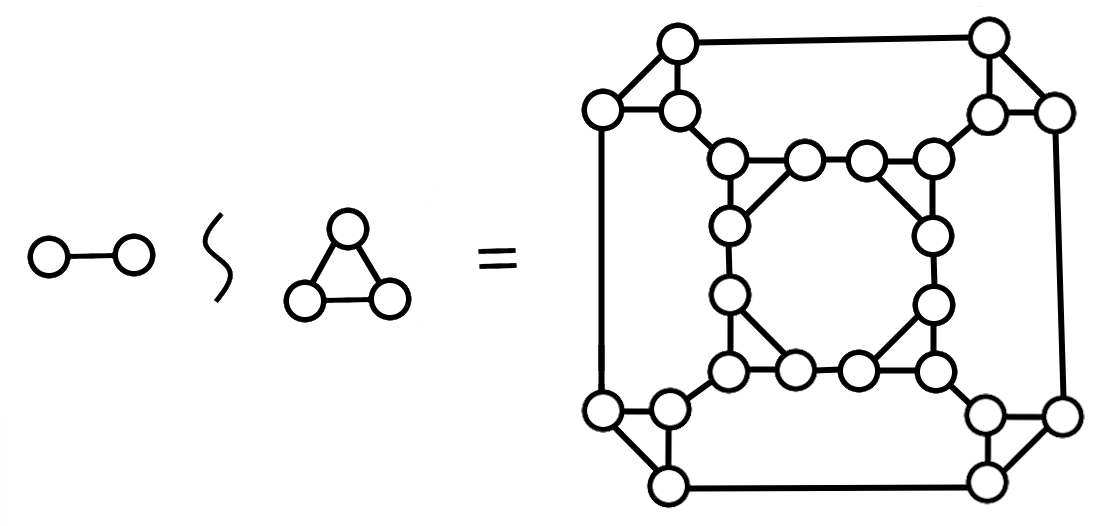}
    \caption{The graph wreath product $P_2 \wr C_3$ is isomorphic to the truncated cube graph.}
    \label{fig:wreath-product-ex-2}
\end{subfigure}
\caption{Two examples of the wreath products of small graphs.}    \label{fig:wreath-product002}
\end{figure}

A pair of examples illustrating the wreath product of some finite graphs is showed in Figure~\ref{fig:wreath-product002}. In our context, the wreath product of infinite graphs is more relevant but their geometry is complex and in general difficult to visualize.

\subsection{Relation Between Lamplighter and Weak cops and Robbers}

In order to use our Lamplighter game to derive results about Weak cops and robbers, we must establish some additional theory.

\begin{definition}
    Let $M = (\Omega, \omega, \Lambda)$ be a streetmap. Define a graph $\Gamma_{\mathcal{B}}(M)$ as follows.
    \begin{enumerate}
        \item $V(\Gamma_{\mathcal{B}}(M)) = \mathcal{B}(M)$, the set of possible $M$-boards, and
        \item two boards are adjacent in $\Gamma_{\mathcal{B}}(M)$ if they differ by a single move.
    \end{enumerate}
    We call $\Gamma_{\mathcal{B}}(M)$ the \emph{graph of $M$-boards}.
\end{definition}

\begin{prop}
    \label{prop:omega-graph-compare-1}
    Let $M = (\Omega, \omega, \Lambda)$ be a streetmap. Then $\Gamma_{\mathcal{B}}(M) \cong (\Omega \wr \Lambda)$.
\end{prop}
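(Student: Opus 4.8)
The plan is to exhibit an explicit bijection between the two vertex sets and then verify that it preserves adjacency in both directions. Recall that a vertex of $\Gamma_{\mathcal{B}}(M)$ is an $M$-board $(M,p,\phi)$, which is determined by the pair $(\phi, p)$ where $p \in V(\Lambda)$ and $\phi \colon V(\Lambda) \to V(\Omega)$ is finitely supported at $\omega$; a vertex of $\Omega \wr \Lambda$ is a pair $(f,v)$ with $f \in W_\omega^{(V)}$ and $v \in V$. Writing $\Lambda = (V,E)$ and $\Omega = (W,F,\omega)$, the sets $V(\Lambda) = V$ and $V(\Omega) = W$ agree literally, so the natural candidate is the map
\[
\Phi \colon V(\Gamma_{\mathcal{B}}(M)) \to V(\Omega \wr \Lambda), \qquad \Phi\big((M,p,\phi)\big) = (\phi, p).
\]
First I would check that $\Phi$ is well defined and a bijection: the defining condition on a board, namely $\phi(v) = \omega$ for all but finitely many $v$, is exactly the finite-support condition defining $W_\omega^{(V)}$, so $\phi \in W_\omega^{(V)}$; and $p \in V(\Lambda) = V$. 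The map is visibly invertible since $(f,v) \mapsto (M,v,f)$ undoes it. This step is essentially bookkeeping.

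The substance is the adjacency equivalence. By definition two boards are adjacent in $\Gamma_{\mathcal{B}}(M)$ precisely when they differ by a single move, and the two move types in the Lamplighter game correspond exactly to the two edge types in Definition~\ref{def:graph-wreath-product}. I would argue this by cases. A move of type~1 (changing position) takes $(M,p,\phi)$ to $(M,p',\phi)$ with $\{p,p'\} \in E(\Lambda)$; under $\Phi$ this becomes the pair $(\phi,p)$ versus $(\phi,p')$, which is an edge of type~2 in $\Omega \wr \Lambda$ since the first coordinates agree ($f = f'$) and $\{p,p'\} \in E$. A move of type~2 (changing the state of the lamp at the current position) takes $(M,p,\phi)$ to $(M,p,\phi')$ where $\phi'$ agrees with $\phi$ off $p$ and $\{\phi(p),\phi'(p)\} \in F$; under $\Phi$ this is $(\phi,p)$ versus $(\phi',p)$, which is an edge of type~1 in $\Omega\wr\Lambda$, with $v = v' = p =: \overline{v}$, $\phi(w) = \phi'(w)$ for $w \neq \overline v$, and $\{\phi(\overline v),\phi'(\overline v)\} \in F$. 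Conversely, every type~1 edge of $\Omega\wr\Lambda$ is the image of a type~2 move and every type~2 edge is the image of a type~1 move, by reading the same equations backward. Thus $\{b,b'\} \in E(\Gamma_{\mathcal{B}}(M))$ if and only if $\{\Phi(b),\Phi(b')\} \in E(\Omega \wr \Lambda)$, which is the graph isomorphism condition.

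The one genuine point requiring care — and the step I expect to be the main obstacle — is confirming that ``differ by a single move'' in the definition of $\Gamma_{\mathcal{B}}(M)$ coincides exactly with ``differ by exactly one of the two elementary moves,'' with no spurious extra adjacencies and no missing ones. In particular I must verify that a single move never changes both the position and a lamp state simultaneously (the game rules make each move one type or the other, so this is immediate) and that the two move types are mutually exclusive as transformations of a board, so the case analysis above is exhaustive and the correspondence between edge types and move types is a genuine bijection rather than merely a surjection. Once the move-type/edge-type dictionary is pinned down, both directions of the adjacency equivalence follow by matching the literal defining equations, and the proposition follows. I would close by noting that $\Phi$ is therefore the required isomorphism, so $\Gamma_{\mathcal{B}}(M) \cong \Omega \wr \Lambda$.
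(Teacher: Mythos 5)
Your proposal is correct and follows essentially the same route as the paper's proof: the bijection $(M,p,\phi)\mapsto(\phi,p)$ with explicit inverse, followed by a case analysis matching the two move types to the two edge types of $\Omega\wr\Lambda$. Your extra remark about checking that the move-type/edge-type dictionary is exhaustive and exclusive is a reasonable point of care, and your notation even fixes a small slip in the paper, which swaps the roles of $(V,E)$ and $(W,F)$ between Definition~\ref{def:graph-wreath-product} and the proof.
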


\begin{proof}
    Let $\Omega = (V,E)$ and $\Lambda = (W,F)$. Define the mapping
    \begin{align*}
        \Phi: \mathcal{B}(M) &\rightarrow V(\Omega \wr \Lambda) \\
        (M, p, \phi) &\mapsto (\phi, p),
    \end{align*}
    which is well-defined since $\phi \in W^{(V)}_{\omega}$ and $p \in V$. This is a bijection since it has an obvious inverse
    \begin{align*}
        \Phi^{-1}: V(\Omega \wr \Lambda) &\rightarrow \mathcal{B}(M) \\
        (f, v) &\mapsto (M, v, f).
    \end{align*}
    Now all that is left to prove is that the edges induced by $\Phi$ are the same as those in the graph $\Omega \wr \Lambda$. Let $(M,p,\phi) \in \mathcal{B}(M)$. Then there are two types of edges from $(M,p,\phi)$.
    \begin{enumerate} 
        \item First, there are the edges corresponding to changing the state of the lamp at the current position, to an adjacent state in $\Omega$. This means that the given edge is connected to $(M,p,\phi')$, where $\{\phi'(p), \phi(p)\} \in F$ and $\phi'(v) = \phi(v)$ for $v \neq p$. This description makes it clear that the target edge $\{(\phi,p), (\phi',p)\}$ is exactly an edge of type 1 in $\Omega \wr \Lambda$.
        
        \item Secondly, there are the edges corresponding to moves that change positions. For a position $p \in V$, the player can move to a position $p' \in V$ with $\{p,p'\} \in E$. Therefore such an edge is connected to $(M,p',\phi)$, and therefore this edge is mapped to $\{(\phi,p), (\phi,p')\}$ in $\Omega \wr \Lambda$ with $\{p,p'\} \in E$, which are exactly the edges of type 2 in $\Omega \wr \Lambda$.
    \end{enumerate}
    Therefore $\Phi$ is a graph isomorphism from $\Gamma_{\mathcal{B}}(M)$ to $\Omega \wr \Lambda$.
\end{proof}

 There is a straightforward connection between the Lamplighter game and the Weak cops and robbers game, described as follows.
 
\begin{proposition}
\label{thm:strategy-transfer2}
    Let $\Lambda$ be a  connected graph with infinite diameter and $\Omega$ a nontrivial connected graph. If $M = (\Omega, \omega, \Lambda)$ is a streetmap and $n$ a positive integer, then  
    the winning strategy for the lamplighter given by Theorem~\ref{thm:omega-lamplighter-strategy} for the  game of Lamplighter on $M$ with $n$ copiers, provides a winning strategy for the robber in a game of Weak Cops and Robbers on $\Omega \wr \Lambda$ with $n$ cops.
\end{proposition}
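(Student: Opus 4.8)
The plan is to replay the Lamplighter game as a Weak Cops and Robbers game through the graph isomorphism $\Phi\colon\Gamma_{\mathcal{B}}(M)\to\Omega\wr\Lambda$ of Proposition~\ref{prop:omega-graph-compare-1}. Since the board metric on $\mathcal{B}(M)$ is by definition the graph metric of $\Gamma_{\mathcal{B}}(M)$, the map $\Phi$ is an isometry onto $\Omega\wr\Lambda$ and a single legal Lamplighter move is exactly a step along one edge. Under $\Phi$ I would set up the dictionary: the lamplighter becomes the robber; the $n$ copiers become the $n$ cops; a board $(M,p,\phi)$ held by a player is recorded as the vertex $\Phi(M,p,\phi)=(\phi,p)$ occupied by the corresponding player; the copier reach and speed are identified with the cops' reach $\rho$ and speed $\sigma$, and the lamplighter speed $\psi$ is identified with the robber's speed. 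A turn of at most $\sigma$ (resp. $\psi$) Lamplighter moves is then a path of length at most $\sigma$ (resp. $\psi$) in $\Omega\wr\Lambda$. The order in which choices are revealed agrees in the two games: the pursuers fix $(\sigma,\rho)$, the evader then fixes his speed together with the data of his area of play, the pursuers pick starting positions, and finally the evader picks his; both games open with the pursuers moving.

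Given the cops' $\sigma$ and $\rho$, I would have the robber adopt precisely the quantities recorded in Remark~\ref{rem:Proof}: speed $\psi=3n+\sigma+\rho+1$, the geodesic path $P\subset\Lambda$ with central vertex $v_P$ and $r=\lceil(\sigma+\rho)/2\rceil+n$. The robber declares the centre of his area of play to be $v_0=\Phi(M,v_P,\mathbf{0})=(\mathbf{0},v_P)$, where $\mathbf{0}$ is the constant map $V(\Lambda)\to\{\omega\}$, and its radius to be $R=6r+1$. All of these depend only on $n,\sigma,\rho$, so they are legitimately available before the cops reveal their starting vertices, exactly as in the Lamplighter setup. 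Once the cops choose vertices $(f_i,p_i)$, the robber reads them as the copier initial boards $(M,p_i,f_i)=\Phi^{-1}(f_i,p_i)$, computes the lamplighter's initial board $B_0$ from the strategy of Theorem~\ref{thm:omega-lamplighter-strategy}, and starts at $\Phi(B_0)$. On every subsequent turn the robber converts the current cop positions into copier boards, advances his shadow board by the lamplighter strategy, and traverses the image path in $\Omega\wr\Lambda$.

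It then remains to verify the two winning conditions. The key output of Theorem~\ref{thm:omega-lamplighter-strategy} is that at every instant of every turn the lamplighter's board is at distance strictly greater than $\rho$ from each copier board; transported by $\Phi$, the robber's position is always at graph distance $>\rho$ from each cop. This single fact delivers both requirements on the robber's side: at the end of a cops' turn no cop lies within distance $\rho$ of the robber, so he is never captured, and every intermediate vertex along a robber move also lies outside the $\rho$-balls around the cops, so each robber move is legal. For the area of play, Remark~\ref{rem:Proof} bounds the distance from $(M,v_P,\mathbf{0})$ to any lamplighter board by $6r+1$; hence $\dist_{\Omega\wr\Lambda}(v_0,\text{robber})\le 6r+1=R$ at all times, so the robber never leaves the ball of radius $R$ centred at $v_0$ and the cops can never permanently keep him out of it. Thus the cops neither capture the robber nor protect the ball, which is exactly a robber win, so the lamplighter strategy transfers as claimed.

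The only delicate point, and the one I expect to require the most care, is the exact matching of the movement rules rather than the distance bookkeeping (which Remark~\ref{rem:Proof} already supplies). Specifically, legality of a robber move demands that the whole path he traverses avoid the $\rho$-neighbourhoods of the cops, and this corresponds not to the lamplighter merely \emph{ending} his turn far from the copiers but to his staying at distance $>\rho$ from every copier board \emph{throughout} the turn --- a property that the proof of Theorem~\ref{thm:omega-lamplighter-strategy} was careful to establish for both the copiers' and the lamplighter's turns. Confirming that ``safe at every instant'' on the Lamplighter side is exactly ``legal path and no capture'' on the Weak Cops side is where I would concentrate the verification.
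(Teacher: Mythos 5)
Your proposal is correct and follows essentially the same route as the paper: transport the game through the isomorphism of Proposition~\ref{prop:omega-graph-compare-1}, match the parameters, use Remark~\ref{rem:Proof} to fix the center $(M,v,\mathbf{0})$ and radius of the area of play, and invoke the fact that the lamplighter stays at distance $>\rho$ from every copier board throughout both players' turns to rule out capture and illegal paths. The only (harmless) divergence is that you take the radius to be $6r+1$ where the paper uses the more generous $R=2(6r+1)$; both bounds follow from Remark~\ref{rem:Proof} and either suffices.
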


\begin{proof}
     By the isomorphism in Proposition \ref{prop:omega-graph-compare-1}, we regard the vertices of $\Omega\wr\Lambda$ as $M$-boards and adjacency defined according to the moves in the Lamplighter game. 
     
     It follows that moving in a game of Lamplighter on $M$ (either changing positions or changing states) is equivalent to moving between vertices in Weak cops and robbers on $\Omega \wr \Lambda$.     
    Consequently, the \emph{lamplighter speed}, \emph{copier reach}, and \emph{copier speed} in the Lamplighter game  correspond exactly with the \emph{robber speed},  \emph{reach}, and \emph{cop speed}, respectively in the Weak Cops and Robbers game.
    The only parameters that do not align exactly are the \emph{radius of the area of play} and the \emph{center of the area of play}. 
    According to the winning strategy for the lamplighter given by Theorem~\ref{thm:omega-lamplighter-strategy} via Remark~\ref{rem:Proof}, we have that $r = \left\lceil\frac{\sigma+\rho}{2}\right\rceil + n$ is  the radius of the area of play,  and the  center of the area of play $v\in V(\Lambda)$  is at the center of a geodesic path $P$ in $\Lambda$ with $2r+1$ vertices. Moreover,
    if $(M,p_1,\phi_1)$ and $(M,p_2,\phi_2)$ are boards representing  the lamplighter during distinct stages in the game then the distance between them is bounded from above by $R=2(6r+1)$. 

The center of the area of play in the Weak Cops and Robbers game is defined to be the board $(M,v,0)$ where $0$ denotes the constant function $V(\Lambda) \to \{w\}$ and the radius of the area of play is defined as $R$. Note that Remark~\ref{rem:Proof} states that any board representing the lamplighter is at distance at most $R$ from $(M,v,0)$.  

Since the vertices of $\Omega\wr\Lambda$ have been identified with boards, the strategy for the robber in the Weak Cops and robbers game is defined as our winning strategy for the lamplighter against $n$ copiers that reproduce the moves of the cops in the Weak Cops and Robbers game.  Then, since at any stage of the Lamplighter game, the board representing the lamplighter is at distance larger than $\rho$ than any board representing a copier, we have that the robber is never captured. We show in the previous paragraph that any stage of the game, the board representing the  lamplighter (and hence the robber) is in the $R$-ball about the center of play $(M,v,0)$ in $\Omega\wr\Lambda$ the Weak Cops and Robbers game. Thus, this constitutes a winning strategy for the robber.
\end{proof}

    The above results gives us the following   corollary that shows that the weak cop number of a larger class of wreath products of graphs is infinite.

\begin{corollary}
    \label{cor:cop-number-compare}
  If $\Lambda$ be a connected graph with infinite diameter  and $\Omega$ is a nontrivial connected graph  then $\wCop(\mathit{\Omega \wr \Lambda})=\infty$.
\end{corollary}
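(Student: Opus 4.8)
The plan is to read off the corollary as a direct consequence of Theorem~\ref{thm:omega-lamplighter-strategy} together with the strategy transfer in Proposition~\ref{thm:strategy-transfer2}. First I would unpack the meaning of $\wCop(\Omega\wr\Lambda)=\infty$. By definition, $\Omega\wr\Lambda$ is $n$-weak cop win precisely when the cops can fix a speed $\sigma$ and reach $\rho$ for which they win $\mathrm{CopWin}(n,\sigma,\rho,\psi,R)$ for every robber response $\psi,R$ and every center $v$; so $\wCop(\Omega\wr\Lambda)=\infty$ is equivalent to the assertion that for every $n\ge 1$ and every choice of cop parameters $\sigma,\rho$, the robber has a winning strategy on $\Omega\wr\Lambda$ against $n$ cops. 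Thus it suffices to exhibit, for each fixed $n$, a robber strategy that defeats $n$ cops regardless of their chosen $\sigma$ and $\rho$.

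Next I would fix $n\ge 1$ and build the streetmap. Since $\Omega$ is nontrivial and connected, pick any vertex $\omega\in V(\Omega)$ and set $M=(\Omega,\omega,\Lambda)$. Because $\Lambda$ is connected of infinite diameter and $\Omega$ is nontrivial and connected, Theorem~\ref{thm:omega-lamplighter-strategy} applies and gives $\wCop^*(M)=\infty$; concretely, for the given $n$ and for any copier speed $\sigma$ and reach $\rho$, the lamplighter has a winning strategy in the Lamplighter game on $M$ against $n$ copiers. By Proposition~\ref{prop:omega-graph-compare-1} the graph of $M$-boards $\Gamma_{\mathcal{B}}(M)$ is isomorphic to $\Omega\wr\Lambda$, so the boards of $M$ are exactly the vertices of $\Omega\wr\Lambda$ and a single Lamplighter move is a single edge traversal. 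Feeding the lamplighter's winning strategy into Proposition~\ref{thm:strategy-transfer2} then yields a winning strategy for the robber in Weak Cops and Robbers on $\Omega\wr\Lambda$ against $n$ cops: the dictionary of the proposition matches lamplighter speed with robber speed, copier reach with cop reach, and copier speed with cop speed, while the center of play is taken to be the board $(M,v,0)$ and the radius of play is $R=2(6r+1)$ as recorded in Remark~\ref{rem:Proof}, so that the robber is never captured and never forced outside the $R$-ball about $(M,v,0)$.

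Finally I would collect the quantifiers. The construction above works for every $n\ge 1$, and for each $n$ it produces a robber response to an arbitrary choice of cop speed $\sigma$ and reach $\rho$; hence $\Omega\wr\Lambda$ fails to be $n$-weak cop win for every $n$, which is exactly $\wCop(\Omega\wr\Lambda)=\infty$. I do not expect a genuine obstacle here, since the mathematical content lives entirely in the preceding results; the only point requiring care is the order and scope of the quantifiers, namely verifying that the transfer in Proposition~\ref{thm:strategy-transfer2} is uniform in the cop parameters $\sigma,\rho$ (the cops, equivalently the copiers, commit to these first), so that a single argument defeats the cops for every admissible pair $(\sigma,\rho)$ rather than for one special choice.
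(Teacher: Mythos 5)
Your proposal is correct and follows the same route the paper takes: the corollary is stated as an immediate consequence of Theorem~\ref{thm:omega-lamplighter-strategy} combined with Proposition~\ref{thm:strategy-transfer2} (via the identification in Proposition~\ref{prop:omega-graph-compare-1} and the parameters recorded in Remark~\ref{rem:Proof}). Your added care with the quantifier order, checking that the robber's response is produced uniformly after the cops commit to $\sigma$ and $\rho$ for each fixed $n$, is exactly the right point to verify and matches the paper's intent.
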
 

\subsection{Weak cop number of wreath-products of groups}

Let us recall the definition of the restricted wreath product of groups. 
Given groups $G$ and $H$, 
the \emph{restricted wreath product} of  $G$ by $H$, denoted by $G \wr H$, is defined as the group on the set $\bigoplus\limits_{H} G  \rtimes H$ with operation
    \[ \left((g_h)_{h \in H}, h_1\right)\left((g'_h)_{h \in H}, h_2\right) = \left((g_{h}g'_{h_1^{-1}h})_{h \in H}, h_1h_2\right). \]
The groups $G$ and $H$ have natural identifications as  subgroups of $G \wr H$ given by 
  \begin{align*}
        \iota_G\colon  G &\hookrightarrow G \wr H \\
        g &\mapsto ((g^{\delta(e_H,h)})_{h \in H}, e_H) \\
        \\
        \iota_H\colon H &\hookrightarrow G \wr H \\
        h &\mapsto ((e_G)_{h \in H}, h)
    \end{align*}
where $\delta(e_H,h) = 1$ if $h = e_H$, the identity of $H$, and is $0$ otherwise. It is an exercise to verify that if $S$ and $T$ generating sets of $G$ and $H$ then $ \iota_G(S) \cup  \iota_H(T)$, which we will denote $S \cup T$, is a finite generating set for $G \wr H$. In particular, if $G$ and $H$ are finitely generated, then $G\wr H$ is finitely generated.    A notable example  is $\mathbb{Z}_2 \wr \mathbb{Z}$ which is known as the \emph{Lamplighter group}.

\begin{prop}
    \label{prop:wreath-product-link}
    Let $G$ and $H$ be groups with finite generating sets $S$ and $T$, respectively. Then $\Cay(G,S) \wr \Cay(H,T) = \Cay\left(G \wr H, S \cup T\right)$.
\end{prop}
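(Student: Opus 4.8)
The plan is to verify that the two graphs $\Cay(G,S) \wr \Cay(H,T)$ and $\Cay(G \wr H, S \cup T)$ have literally the same vertex set and the same adjacency relation, so that the identity map on vertices is the required isomorphism. Taking $\Omega = \Cay(G,S)$ with distinguished vertex $\omega = e_G$ and $\Lambda = \Cay(H,T)$, the vertex set of the graph wreath product is $G_{e_G}^{(H)} \times H$ by Definition~\ref{def:graph-wreath-product}, which is exactly the underlying set $\bigoplus_H G \rtimes H$ of the group $G \wr H$ once we identify a finitely-supported function $f\colon H \to G$ based at $e_G$ with the sequence $(f(h))_{h \in H} \in \bigoplus_H G$. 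Under this identification the vertex map is the identity, so only the edges need to be compared.

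First I would record the adjacency relations on both sides explicitly. In $\Cay(G \wr H, S\cup T)$ two vertices $x, y$ are adjacent precisely when $x^{-1} y$ lies in $\iota_G(S \cup S^{-1}) \cup \iota_H(T \cup T^{-1})$, since the edge set of a Cayley graph consists of unordered pairs $\{x, xu\}$ with $u$ a generator, and $\iota_G, \iota_H$ are homomorphisms so that $\iota_G(s)^{-1} = \iota_G(s^{-1})$ and $\iota_H(t)^{-1} = \iota_H(t^{-1})$. On the other side, $(f, v)$ and $(f', v')$ are adjacent in $\Cay(G,S) \wr \Cay(H,T)$ exactly under the two conditions of Definition~\ref{def:graph-wreath-product}: an edge of type 1 (same position $v$, functions agreeing off $v$, and $f(v)$ adjacent to $f'(v)$ in $\Cay(G,S)$) or an edge of type 2 (equal functions and $v$ adjacent to $v'$ in $\Cay(H,T)$).

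The core computation, which I expect to be entirely routine, is to expand $x \cdot \iota_G(s)$ and $x \cdot \iota_H(t)$ for $x = ((g_h)_{h\in H}, h_1)$ using the wreath-product multiplication. The first yields $((g'_h)_{h \in H}, h_1)$ with $g'_h = g_h$ for $h \neq h_1$ and $g'_{h_1} = g_{h_1} s$, because the $\delta$-sequence $\iota_G(s)$ is supported at $e_H$ and the factor $g'_{h_1^{-1} h}$ is nontrivial only when $h = h_1$; this is exactly a type-1 move, changing the state of the lamp at the current position $h_1$ along an edge of $\Cay(G,S)$. The second yields $((g_h)_{h \in H}, h_1 t)$, leaving the function untouched and moving the position along an edge of $\Cay(H,T)$, which is exactly a type-2 move. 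Allowing $s$ to range over $S \cup S^{-1}$ and $t$ over $T \cup T^{-1}$ then shows that the neighbours of $x$ produced by right-multiplication are precisely the type-1 and type-2 neighbours of the corresponding board, giving equality of the two adjacency relations.

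The only point requiring a little care, rather than a genuine obstacle, is the symmetrization: edges of a Cayley graph and of a graph wreath product are unordered, so I must match the adjacency relation rather than directed multiplication, which is why the computation is carried out over $S \cup S^{-1}$ and $T \cup T^{-1}$, and why the type-1 condition $\{f(v), f'(v)\} \in E(\Cay(G,S))$ (i.e.\ $f(v)^{-1} f'(v) \in S \cup S^{-1}$) corresponds to $x^{-1} y = \iota_G(s)$ with $s \in S \cup S^{-1}$. Once both relations are written in the form ``$x^{-1} y$ belongs to a specified symmetric set of generators,'' they coincide term by term, and the identity vertex map is the desired graph isomorphism.
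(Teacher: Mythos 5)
Your proposal is correct and follows essentially the same route as the paper: identify the vertex sets via $G_{e_G}^{(H)} \times H = \bigoplus_H G \rtimes H$, then expand $x\,\iota_G(s)$ and $x\,\iota_H(t)$ with the wreath-product multiplication to match type-1 and type-2 edges respectively. Your explicit attention to symmetrizing over $S \cup S^{-1}$ and $T \cup T^{-1}$ is a small point the paper leaves implicit, but the argument is the same.
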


\begin{proof}
    Note that $V(\Cay(G,S)) = G$ and $V(\Cay(H,T)) = H$, so $V(\Cay(G,S) \wr \Cay(H,T)) = G^{(H)} \times H = V(\Cay(G \wr H, S \cup T))$. Now we need only show that the edge sets of the two graphs are equal. Take an arbitrary element $x = ((g_h)_{h \in H}, h_0) \in G^{(H)} \times H$.
    \begin{enumerate}
        \item There are the edges of type 1 in $\Cay(G,S) \wr \Cay(H,T)$. For each $s \in S$, the vertex $(\phi,h_0)$ is adjacent to the vertex $(\phi',h_0)$, where $\phi'(h_0) = \phi(h_0)s$, and $\phi'(h) = \phi(h)$ for all $h \in H \setminus \{h_0\}$. This aligns with edges of the form $x\iota_G(s)$ in $\Cay(G \wr H, S \cup T)$, since
        \begin{align*}
            x\iota_G(s) &= ((\phi(h))_{h \in H},h_0)((s^{\delta(e_H,h)})_{h \in H},e_H) \\
            &= ((\phi(h)s^{\delta(e_H,h_0^{-1}h)})_{h \in H}, h_0) \\
            &= (\phi',h_0).
        \end{align*}
        \item Secondly, we have the edges of type 2. For each $t \in T$, the vertex $(\phi,h_0)$ is adjacent to the vertex $(\phi,h_0t)$. This aligns with edges of the form $x\iota_H(t)$ in $\Cay(G \wr H, S \cup T)$, since
        \begin{align*}
            x\iota_H(t) &= ((\phi(h))_{h \in H},h_0)((e_H)_{h \in H},t) \\
            &= (\phi(h)_{h \in H},h_0t) \\
            &= (\phi,h_0t).
        \end{align*}
    \end{enumerate}
    
       Hence $\Cay(G,S) \wr \Cay(H,T)$ and $\Cay(G \wr H, S \cup T)$ are  isomorphic as graphs.
\end{proof}

We now have all of the tools we need to derive our main result.

\begin{theorem}
    \label{thm:wreath-prod-wcop}
    Let $G$ and $H$ be finitely generated groups. If $G$ is nontrivial and $H$ is infinite, then $\wCop(G \wr H) = \infty$.
\end{theorem}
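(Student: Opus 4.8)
The plan is to assemble the machinery developed in this section together with the fact, recorded in the definition of the weak cop number of a group, that $\wCop(G\wr H)$ may be computed on any Cayley graph of $G\wr H$ with respect to a finite generating set. I would fix finite generating sets $S\subset G$ and $T\subset H$, neither containing the identity, and set $\Omega=\Cay(G,S)$ and $\Lambda=\Cay(H,T)$. Then I would verify that the pair $(\Omega,\Lambda)$ satisfies the hypotheses of Corollary~\ref{cor:cop-number-compare}, apply that corollary, and transport the conclusion back to $G\wr H$ using Proposition~\ref{prop:wreath-product-link}.

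First I would check the two hypotheses on the graphs. Since $G$ is nontrivial, it has at least two elements, so $\Omega=\Cay(G,S)$ has at least two vertices and is therefore a nontrivial connected graph (connectedness holds because $S$ generates $G$). Since $H$ is infinite and finitely generated, $\Lambda=\Cay(H,T)$ is a connected, locally finite, infinite graph; as already observed in Section~\ref{sec:Background}, König's lemma then forces $\Lambda$ to contain geodesics of arbitrarily large length, so $\Lambda$ has infinite diameter. (Concretely, a locally finite connected graph of finite diameter would be contained in a single finite ball and hence be finite, contradicting that $H$ is infinite; this is exactly where the hypothesis on $H$ is used.)

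With both hypotheses in place, Corollary~\ref{cor:cop-number-compare} gives $\wCop(\Omega\wr\Lambda)=\infty$. By Proposition~\ref{prop:wreath-product-link} we have the identification $\Omega\wr\Lambda=\Cay(G,S)\wr\Cay(H,T)=\Cay(G\wr H,S\cup T)$, and since the weak cop number is an isomorphism invariant of graphs this yields $\wCop(\Cay(G\wr H,S\cup T))=\infty$. Finally, because $S\cup T$ is a finite generating set of $G\wr H$ and any two Cayley graphs of a group with respect to finite generating sets share the same weak cop number, we conclude $\wCop(G\wr H)=\infty$.

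I do not expect a genuine obstacle at this stage: all of the substance lives in Theorem~\ref{thm:omega-lamplighter-strategy} (the lamplighter's escape strategy) and its translation through Proposition~\ref{thm:strategy-transfer2} into Corollary~\ref{cor:cop-number-compare}. The only points requiring care in the present argument are the routine verifications that $\Omega$ is nontrivial and that $\Lambda$ has infinite diameter, after which the result is a direct chaining of Corollary~\ref{cor:cop-number-compare}, Proposition~\ref{prop:wreath-product-link}, and the invariance of $\wCop$ under change of finite generating set.
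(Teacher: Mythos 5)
Your proposal is correct and follows essentially the same route as the paper: fix finite generating sets, apply Corollary~\ref{cor:cop-number-compare} to $\Cay(G,S)\wr\Cay(H,T)$, identify this graph with $\Cay(G\wr H, S\cup T)$ via Proposition~\ref{prop:wreath-product-link}, and invoke invariance of $\wCop$ under change of finite generating set. You are in fact slightly more careful than the paper in explicitly verifying that $\Cay(G,S)$ is nontrivial and that $\Cay(H,T)$ has infinite diameter, which the paper leaves implicit.
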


\begin{proof}
    Pick finite generating sets $S$ and $T$ for $G$ and $H$, respectively. By Corollary~\ref{cor:cop-number-compare}, $\wCop((\Cay(G,S) \wr \Cay(H,T)) = \infty$. Since $\Cay(G,S) \wr \Cay(H,T) = \Cay(G \wr S, S \cup T)$ by Proposition \ref{prop:wreath-product-link}, we have $\wCop(\Cay(G \wr S, S \cup T)) = \infty$, and therefore $\wCop(G \wr H) = \infty$.
\end{proof}

\section{Thompson's group $F$ has infinite weak cop number}\label{sec:Francesco}

In this part we describe an argument communicated to the authors by Francesco Fournier-Facio.

One can deduce that $\wCop(F)=\infty$, from the following three statements.

\begin{theorem}\cite[Theorem E]{Lee2023}
Let $H$ be a subgroup of a be a finitely generated group $G$. If $H$ is a retract of $G$, then $\wCop(H)\leq \wCop(G)$.
\end{theorem}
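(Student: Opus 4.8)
The plan is to transfer a winning cop strategy on $\Cay(G)$ to one on $\Cay(H)$ by running a ``shadow game'' on $G$ whose robber is the image of the $H$-robber and whose cops project down to the $H$-cops. Since $\wCop$ is independent of the chosen finite generating set, I would first fix a finite generating set $T$ of $H$ and enlarge it to a finite generating set $S \supseteq T$ of $G$; here $H = r(G)$ is finitely generated as the surjective image of $G$ under the retraction $r \colon G \to H$. With these choices the inclusion $\iota \colon \Cay(H,T) \to \Cay(G,S)$ is $1$-Lipschitz, and because $r$ is a homomorphism it is $C$-Lipschitz for $C = \max_{s \in S} |r(s)|_T$; moreover $r \circ \iota = \id_H$. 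These three facts, together with careful attention to the direction of each inequality, are the engine of the whole argument.

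Assume $\wCop(G) = n < \infty$ (the case $n = \infty$ is vacuous), so there exist $\sigma_G, \rho_G$ with which $n$ cops win on $\Cay(G,S)$ against every robber. On $\Cay(H,T)$ I would have the cops declare speed $\sigma_H = C\sigma_G$ and reach $\rho_H = C\rho_G$. After the $H$-robber commits to $\psi_H$, $R_H$, and a center $v_H$, I set up a virtual $G$-game in which the robber is assigned speed $\psi_G = \psi_H$, radius $R_G = R_H$, and center $v_G = \iota(v_H)$, and I let the $G$-winning strategy dictate the virtual cops. The $H$-cops maintain the invariant that their positions are the $r$-images of the virtual cops, while the virtual robber is always $\iota$ of the real robber. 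Each round, the $H$-cops move by projecting the $G$-strategy's response (a $G$-move of length $\le \sigma_G$ projects to an $H$-move of length $\le C\sigma_G = \sigma_H$, so the speed suffices), and the real robber's move is lifted by $\iota$ to a virtual robber move.

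The key verifications, and the place I expect the real work, concern legality and win transfer. For legality of the lifted move I must check that a path in $\Cay(H)$ avoiding the $\rho_H$-balls of the real cops lifts to a path avoiding the $\rho_G$-balls of the virtual cops: if a lifted vertex $\iota(q)$ were within $\rho_G$ of a virtual cop $c$, then $\dist_H(q, r(c)) = \dist_H(r\iota(q), r(c)) \le C\,\dist_G(\iota(q), c) \le C\rho_G = \rho_H$, contradicting safety in $H$; and since $\iota$ is $1$-Lipschitz the endpoint still moves at most $\psi_H = \psi_G$, so the virtual move is a legal $G$-robber move and the $G$-strategy's guarantees apply. For the win transfer I use the two directions of the comparison: because $\iota$ is $1$-Lipschitz, the real robber lying in $B_H(v_H, R_H)$ forces the virtual robber into $B_G(v_G, R_G)$, so keeping the virtual robber permanently out of $B_G(v_G, R_G)$ keeps the real robber out of $B_H(v_H, R_H)$; and because $r$ is $C$-Lipschitz, a virtual capture (a virtual cop within $\rho_G$ of $\iota(p)$) projects to $\dist_H(r(c), p) \le C\rho_G = \rho_H$, a real capture. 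Hence whichever way the $G$-cops win, the $H$-cops win with $n$ cops, giving $\wCop(H) \le n$.

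The main obstacle is bookkeeping rather than any deep difficulty: the scheme closes up only because $\iota$ and $r$ are Lipschitz with ``opposite strengths'' and compose to the identity, and one must place the constant $C$ on the correct side in $\sigma_H$, $\rho_H$, and $R_G$ so that \emph{safety lifts upward} while \emph{capture and ball-containment project downward}. A secondary point to state carefully is the turn ordering: the cops choose parameters, the robber chooses parameters, then placements occur cops-first; the simulation respects this because the virtual parameters and the $G$-strategy's initial placement depend only on data already revealed.
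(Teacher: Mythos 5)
Your proof is correct: the lift-the-robber-via-$\iota$ / project-the-cops-via-$r$ simulation, with the reach and speed rescaled by the Lipschitz constant $C$ of the retraction on the $H$-side, is exactly the standard argument, and your legality check (safety lifts upward because $r\circ\iota=\id_H$ and $r$ is $C$-Lipschitz) and your two win-transfer checks (capture and ball-containment project downward) have the inequalities pointing the right way. Note that the paper itself does not prove this statement but quotes it from \cite[Theorem E]{Lee2023}, whose proof is essentially this same simulation; the only cosmetic points to add in a full write-up are taking $C\ge 1$ so that $\sigma_H=C\sigma_G$ and $\rho_H=C\rho_G$ are positive integers even in the degenerate case where $r$ kills every generator (i.e.\ $H$ trivial, which can be dispatched separately), and a remark that the finite generation of $H$, which you correctly deduce from $H=r(G)$, is needed for $\wCop(H)$ to be defined at all.
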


\begin{theorem}\cite[Theorem C]{Lee2023}
$\wCop(\mathbb{Z}^2)=\infty$.    
\end{theorem}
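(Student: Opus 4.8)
The plan is to show directly that for every number of cops $n$ and every choice of cop speed $\sigma$ and reach $\rho$, the robber can choose parameters and a strategy that wins the Weak Cops and Robbers game on the standard Cayley graph of $\mathbb{Z}^2$ (the square grid). Fix such $n,\sigma,\rho$. I would have the robber pick the center $v$ to be the origin, choose a speed $\psi$ much larger than $\sigma$ (say $\psi=\sigma+\rho+1$, so that he is strictly faster and can clear the reach zone in one step), and choose the radius of the area of play $R$ enormous compared to $n(\sigma+\rho)$. Recall that the cops win only if they capture the robber or, from some turn on, permanently keep him at distance $>R$ from $v$; staying at distance $\le R$ forever already counts as a robber win. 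So it suffices to guarantee two things for all time: (i) the robber is never captured, and (ii) he re-enters the ball $B_R(v)$ infinitely often.

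The geometric heart of the argument is that, when $R$ is large, the cops cannot erect a barrier around $B_R(v)$. At the start of any robber turn let $F=\bigcup_{i=1}^n B_\rho(c_i)$ be the union of the reach-balls about the current cop positions $c_1,\dots,c_n$; the robber may travel along any path avoiding $F$. To keep the robber permanently outside $B_R(v)$ the set $F$ must separate his position from $B_R(v)$ in the grid, and in the planar grid such a separation forces $F$ to contain a cycle encircling $B_R(v)$, which in the $\ell^1$ metric has length at least $8R$. But each ball $B_\rho(c_i)$ has diameter $2\rho$ and can bridge an arc of length at most $2\rho+1$; hence $n$ such balls block at most $n(2\rho+1)$, and choosing $R>n(2\rho+1)$ makes an encircling barrier impossible. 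Consequently at every robber turn there is a gap, i.e.\ a clear path connecting the robber's (unbounded) region to $B_R(v)$, and moreover an angular sector of the boundary of $B_R(v)$ of positive width that the cops do not control.

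With this in hand I would describe the robber's strategy as alternating two phases governed by a single capture-avoidance invariant: the robber ends each of his turns at distance $>\sigma+\rho$ from every cop, so that after the cops' next move (of length $\le\sigma$) he is still at distance $>\rho$ and hence not captured. In the \emph{return} phase he heads through the current clear gap straight toward $v$, detouring around each of the $\le n$ blobs at a cost of $O(\rho)$ steps; because he advances $\psi>\sigma$ per turn while a cop can shift a gap boundary only $\sigma$ per turn, the cops cannot close the doorway before he passes, and he reaches $B_R(v)$ in boundedly many turns. In the \emph{reset} phase he retreats radially into a cop-free half-plane (which exists since the $n$ cops occupy a bounded set), restoring a large safety margin. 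Iterating yields infinitely many incursions into $B_R(v)$ with no capture, so the $n$ cops do not win; as $n$ was arbitrary, $\wCop(\mathbb{Z}^2)=\infty$.

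The main obstacle is the \emph{dynamic} step: upgrading the static, per-turn existence of a clear path into an actual multi-turn incursion against cops that reposition adaptively between the robber's moves, while never violating the capture-avoidance margin. The clean way to make this rigorous is to maintain a quantitative loop invariant on a large ring near radius $R$, tracking an angular safe sector, and to show via the $\ell^1$ perimeter estimate that the total angular width the $n$ slow cops can contest is $O\!\big(n(\sigma+\rho)/R\big)$, which stays below $2\pi$ once $R$ is large, so a usable sector survives every turn. Pinning down these competing inward-progress and lateral-evasion bounds near the boundary of the ball is where the real work lies; the planar separation estimate and the speed comparison $\psi>\sigma$ are the elementary ingredients that feed into it.
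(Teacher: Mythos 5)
First, a point of comparison: this paper does not prove the statement at all --- it is imported verbatim as Theorem C of Lee et al.~\cite{Lee2023} and used as a black box in the argument for Thompson's group $F$. So there is no in-paper proof to measure yours against; you are attempting to reprove the cited result from scratch. The route you take --- a planar separation/perimeter estimate showing that $n$ reach-balls of radius $\rho$ cannot form a barrier around a ball of radius $R$ once $R \gg n\rho$, combined with a robber who re-enters $B_R(v)$ infinitely often --- is the natural one for $\mathbb{Z}^2$, and the static half of it (no encircling barrier, hence a clear path at every instant) is correct in substance even if the constants and the dual-cycle lemma are stated loosely.

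The genuine gap is the one you flag yourself: the per-turn existence of a clear path does not yield a strategy, and the proposal never closes this. The cops move adaptively between your turns, so the ``gap'' you plan to use can migrate; you must either show the robber makes a definite amount of inward progress per turn that the cops cannot undo, or exhibit a free angular sector that persists across the several turns an incursion requires. Your suggested loop invariant (total contested angular width $O(n(\sigma+\rho)/R)$) is the right shape, but it is an announcement of the needed lemma, not a proof of it, and this is precisely where the difficulty of the theorem lives. Two further concrete problems: (i) the parameter choice $\psi=\sigma+\rho+1$ is too small for what you claim --- detouring around up to $n$ reach-balls costs up to $O(n\rho)$ extra steps, and restoring the end-of-turn safety margin $>\sigma+\rho$ from \emph{all} $n$ cops may require escaping a cluster of diameter $O(n(\sigma+\rho))$, so $\psi$ must depend on $n$ as well (which is permitted, since the robber chooses $\psi$ knowing $n,\sigma,\rho$, but it must be said); (ii) your separation argument silently assumes the robber always sits in the unbounded component of the complement of the reach-balls --- the cops could instead try to enclose the robber himself with a circuit of length $n(2\rho+1)$, so the reset phase must be shown to prevent this, not just asserted. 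Until the dynamic incursion lemma is actually proved with explicit progress and evasion bounds, the proposal is a plausible plan rather than a proof.
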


The following statement is well-known among experts on Thompson's group $F$, and is implicit in expositions on generalizations of Thompson's groups, see~\cite{bieri2016groups} or the recent work~\cite[Cor. 3.10]{balasubramanya2024hyperbolic}. 

\begin{theorem}
Thompson's group $F$ retracts onto $\mathbb{Z}^2$.  
\end{theorem}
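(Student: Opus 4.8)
The plan is to realize the retraction as a split section of a surjection $F\to\mathbb{Z}^2$. Recall that a subgroup $H\le G$ is a \emph{retract} exactly when there is a homomorphism $r\colon G\to H$ with $r|_H=\mathrm{id}_H$. Thus it suffices to produce a surjective homomorphism $\pi\colon F\to\mathbb{Z}^2$ together with a homomorphic section $s\colon\mathbb{Z}^2\to F$, i.e. $\pi\circ s=\mathrm{id}$: then $H:=s(\mathbb{Z}^2)\cong\mathbb{Z}^2$ and $r:=s\circ\pi$ restricts to the identity on $H$, since for $h=s(v)$ one has $r(h)=s(\pi s(v))=s(v)=h$. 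So the whole statement reduces to exhibiting such a pair $(\pi,s)$.

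For $\pi$ I would use the endpoint-slope homomorphism. Model $F$ as the group of orientation-preserving piecewise-linear homeomorphisms of $[0,1]$ with finitely many dyadic breakpoints and all slopes in $2^{\mathbb{Z}}$, and define
\[ \pi\colon F\to\mathbb{Z}^2,\qquad \pi(f)=\big(\log_2 f'(0^+),\ \log_2 f'(1^-)\big). \]
Since $0$ and $1$ are fixed by every $f\in F$, the chain rule gives $\pi(f\circ g)=\pi(f)+\pi(g)$, so $\pi$ is a homomorphism, and it is visibly surjective. (In fact $\ker\pi=[F,F]$, so $\pi$ is the abelianization and $F^{\mathrm{ab}}\cong\mathbb{Z}^2$, but only surjectivity of $\pi$ is needed below.)

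To build the section $s$ I would choose two commuting elements whose $\pi$-images form a basis. Take $a\in F$ supported in $[0,1/2]$ with $a'(0^+)=2$ — for instance a rescaled copy of $x_0^{-1}$ acting on $[0,1/2]$ and the identity on $[1/2,1]$ — and symmetrically $b\in F$ supported in $[1/2,1]$ with $b'(1^-)=2$. Disjoint supports force $ab=ba$, so $s(m,n):=a^m b^n$ is a well-defined homomorphism $\mathbb{Z}^2\to F$, and by construction $\pi(a)=(1,0)$ and $\pi(b)=(0,1)$, whence $\pi\circ s=\mathrm{id}_{\mathbb{Z}^2}$. In particular $s$ is injective, $\langle a,b\rangle\cong\mathbb{Z}^2$, and $r=s\circ\pi\colon F\to\langle a,b\rangle$ is the desired retraction onto a copy of $\mathbb{Z}^2$.

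The construction is explicit, so the only point demanding care is checking that the chosen $a$ and $b$ genuinely lie in $F$ and realize the prescribed endpoint slopes while respecting the dyadic-breakpoint and power-of-two-slope constraints; this is where one uses that $F$ really is the dyadic PL group. Granting that, independence of $\pi(a)$ and $\pi(b)$ immediately yields $\langle a,b\rangle\cong\mathbb{Z}^2$ (any relation $a^m b^n=e$ maps to $(m,n)=0$), and the retraction follows formally. I expect this bookkeeping, rather than any conceptual difficulty, to be the main obstacle; alternatively one can bypass the PL model and argue from the finite presentation $\langle x_0,x_1\mid\ [x_0x_1^{-1},x_0^{-1}x_1x_0],\,[x_0x_1^{-1},x_0^{-2}x_1x_0^2]\rangle$, whose relators are commutators, to see directly that $F^{\mathrm{ab}}\cong\mathbb{Z}^2$ and then lift a basis to a commuting pair.
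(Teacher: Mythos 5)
Your proposal is correct and is essentially the paper's own argument: both use the endpoint-slope homomorphism $f\mapsto(\log_2 f'(0),\log_2 f'(1))$ and split it with two disjointly supported elements (one supported in $[0,1/2]$ with slope $2$ at $0$, one in $[1/2,1]$ with slope $2$ at $1$) generating a copy of $\mathbb{Z}^2$. The only difference is presentational — you phrase it explicitly as a section $s$ with $r=s\circ\pi$, while the paper directly exhibits the subgroup $H=\langle g_0,g_1\rangle$ and checks $r|_H=\mathrm{id}$ — but the construction is identical.
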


Let us share an explanation of the above statement based on~\cite[Sec. 3]{balasubramanya2024hyperbolic}. Regard Thompson’s group $F$   as the
subgroup of homemorphisms of the unit interval consisting of orientation-preserving, piecewise linear homeomorphisms whose non-differentiable points are dyadic rationals and whose slopes are all powers of $2$. Given an element $g\in F$, let $\chi_0(g) = \log_2 g'(0)$ and $\chi_1(g)=\log_2 g'(1)$, where $g'(0)$ and $g'(1)$ denote the slopes of the piece-wise linear homemorphism $g$ at $0$ and at $1$ respectively. Now we show that the group homomorphism   $r\colon F\to \mathbb{Z}^2$ given by $r=(\chi_1,\chi_2)$ is a retraction.  Let $g_0\in F$ such that $g_0'(0)=2$ and   $\mathsf{Supp}(g_0) = \{x \in [0, 1] \mid g_0(x) \neq x\} = [0,1/2]$; then, let $g_1\in F$ such that $g_1'(1)=2$ and $\mathsf{Supp}(g_2)=[1/2,1]$ . Observe that the subgroup $H=\langle g_0,g_1\rangle$ is isomorphic to $\mathbb{Z}^2$, $r(g_0)=(1,0)$, and   $r(g_1)=(0,1)$. Hence $r$ is a retraction.

The Bieri-Strebel groups are generalizations of Thompson's group, see~\cite{bieri2016groups} for definitions. An analogous argument provides a retraction $G\to \mathbb{Z}^2$ for $G$ any  Bieri-Strebel group with a finitely generated slope group. Hence any group in this class has infinite weak cop number.

\bibliographystyle{alpha} \bibliography{xbib}

\end{document}